\documentclass[showpacs,preprintnumbers,amsmath,amssymb,12pt]{article}
\usepackage{a4wide,amsmath,amsthm,epsfig,graphicx}
\usepackage{amsmath,amsthm,amssymb}
\usepackage{amsfonts}
\usepackage{graphics}
\usepackage{dsfont}
\usepackage{bbm}
\usepackage{bm}
\usepackage{xcolor}
\usepackage{dcolumn}
\usepackage{fancyhdr,fancybox}
\usepackage[svgnames]{pstricks}
\usepackage{pst-text}
\usepackage{pst-plot,pst-eucl,pstricks-add}
\usepackage{caption}
\usepackage{subcaption}
\usepackage{systeme, mathtools}
\usepackage{algorithm}
\usepackage{algpseudocode}

\algrenewcommand\alglinenumber[1]{{\sffamily\footnotesize#1}}
\algnewcommand\Not{\textbf{not}}
\algnewcommand\FALSE{FALSE}
\algnewcommand\TRUE{TRUE}

\textwidth=15cm
\textheight=22cm
\oddsidemargin=0.9cm
\evensidemargin=0.5cm

\newtheorem{thm}{Theorem}[section]

\newtheorem{prop}[thm]{Proposition}

\newtheorem{remark}{Remark}

\newcommand{\sde}{\emph{SDE}}

\newcommand{\refeqq}[1]{~(\ref{#1})}
\newcommand{\myref}[1]{~\ref{#1}}
\newcommand{\mycite}[1]{~\cite{#1}}

\newcommand{\rv}{\textit{rv}}
\newcommand{\id}{\textit{id}}
\newcommand{\iid}{\textit{iid}}
\newcommand{\sd}{\textit{sd}}

\newcommand{\LKh}{L\'{e}vy-Khintchin}

\newcommand{\pdf}{\textit{pdf}}

\newcommand{\chf}{\textit{chf}}

\newcommand{\Levy}{L\'{e}vy}

\newcommand{\Pqo}{\bm{P}\hbox{-\emph{a.s.}}}
\newcommand{\eqd}{\stackrel{d}{=}}

\newcommand{\PR}[1]{\bm{P}\left\{{#1}\right\}}
\newcommand{\EXP}[1]{\bm{E}\left[{#1}\right]}

\newcommand{\lch}{\textit{lch}}

\newcommand{\poiss}{\mathcal{P}}

\newcommand{\unif}{\mathcal{U}}

\newcommand{\gam}{\mathcal{G}}

\newcommand{\arem}{$a$-remainder}

\newcommand{\cts}{\mathcal{CTS}}

\DeclareMathOperator{\sign}{sign}

\vsize=29.7cm
\hsize=20cm
\topmargin -1.5cm
\textheight 24cm
\oddsidemargin -0.5cm
\textwidth 15.2cm
\usepackage[a4 paper, top = 1.3 cm, bottom = 1.5 cm, left = 1.3 cm, right = 1.3 cm]{geometry}
\vsize=29.7cm \hsize=20cm \topmargin -1.5cm \textheight 24cm
\oddsidemargin +0.5cm \textwidth 15.2cm

\title{\Huge \textbf{Tempered stable distributions\\ and finite variation\\ Ornstein-Uhlenbeck
processes\footnote{The views, opinions, positions or strategies expressed in this article are those of the authors
and do not necessarily represent the views, opinions, positions or strategies of, and should not be
attributed to E.ON SE.}}}

\author{Nicola \textsc{Cufaro Petroni}\footnote{cufaro@ba.infn.it}  \\
Dipartimento di \textsl{Matematica} and \textsl{TIRES}, Universit\`a di Bari\\
\textsl{INFN} Sezione di Bari\\ \vspace{7pt}
via E. Orabona 4, 70125 Bari, Italy\\
Piergiacomo
\textsc{Sabino}\footnote{piergiacomo.sabino@eon.com}\\
Quantitative Modelling \\
E.ON SE\\
\vspace{5pt}
 Br\"{u}sseler Platz 1, 45131 Essen, Germany
}

\date{}
\begin{document}
    \maketitle \thispagestyle{empty}
        \begin{abstract}
Constructing \Levy-driven Ornstein-Uhlenbeck processes is a task
closely related to the notion of self-decomposability. In
particular, their transition laws are linked to the properties of
what will be hereafter called the \emph{\arem}\ of their
self-decomposable stationary laws. In the present study we fully
characterize the \Levy\ triplet of these \arem s and we provide a
general framework to deduce the transition laws of the finite
variation Ornstein-Uhlenbeck processes associated with tempered
stable distributions.  We focus finally on the subclass of the
exponentially-modulated tempered stable laws and we derive the
algorithms for an exact generation of the skeleton of
Ornstein-Uhlenbeck processes related to such distributions, with the
further advantage of adopting a procedure computationally more
efficient than those already available in the existing literature.

\noindent \emph{Keywords}: \Levy-driven Ornstein-Uhlenbeck
Processes; Self-decomposable Laws; Tempered Stable Distributions;
Simulations
        \end{abstract}
        \section{Introduction}\label{sec:introduction}

The \Levy-driven Ornstein-Uhlenbeck (OU) processes have attracted
considerable interest in recent studies because of their potential
applications to a wide range of fields. As observed in
Barndorff-Nielsen and Shephard\mycite{BNSh01}, these OU process are
mathematically tractable and can be seen as the continuous-time
analogues of the autoregressive AR($1$) processes (see
Wolfe\mycite{Wolfe1982}). They constitute indeed a rich and flexible
class that can accommodate features such as jumps, semi-heavy tails
and asymmetry which are well evident in the real physical phenomena
as well as in the financial data. The energy and commodity markets
exhibit for instance a strong mean-reversion and sudden spikes which
makes the use of the \Levy-driven OU-processes more advisable than
the standard Gaussian framework. In addition, several approaches
based on \Levy\ processes such as the Variance Gamma (VG) or the
Normal Inverse Gaussian (NIG) have been proposed to overcome the
known limits of the usual Black-Scholes model (see Madan and
Seneta\mycite{MadanSeneta90} and Barndorff-Nielsen\mycite{BN98}):
all these non Gaussian noises can of course be adopted as the
drivers of OU processes. Example of their applications to
mathematical finance can be found in Benth and
Pircalabu\mycite{BenthPircalabu18} and Cufaro Petroni and
Sabino\mycite{cs20_2} in the context of energy markets, in Bianchi
and Fabozzi\mycite{Bianchi2015} for the modeling of credit risk and
in Barndorff-Nielsen and Shephard\mycite{BNSh01} for stochastic
volatility modeling.

The distributional properties of a non-Gaussian process of OU-type
are closely related to the notion of self-decomposability (\sd),
because as noted in Barndorff-Nielsen and Shephard\mycite{BNSh01}
and in Taufer and Leonenko\mycite{TAUFER2009}, the stationary law of
such a process must be \sd. We recall here that a law with \chf\
$\eta(u)$ is said to be \sd\ (see Sato\mycite{Sato}, Cufaro
Petroni~\cite{Cufaro08}) when for every $0<a<1$ we can find another
law with \chf\ $\chi_a(u)$ such that
\begin{equation}\label{aremchf}
    \eta(u)=\eta(au)\chi_a(u).
\end{equation}
Of course a random variable (\rv) $X$ with \chf\ $\eta(u)$ is also
said to be \sd\ when its law is \sd, and looking at the definitions
this means that for every $0<a<1$ we can always find two
\emph{independent} \rv's -- a $Y$ with the same law of $X$, and a
$Z_a$ with \chf\ $\chi_a(u)$ -- such that in distribution
\begin{equation}\label{sdec-rv}
    X\eqd aY+Z_a
\end{equation}
Hereafter the \rv\ $Z_a$ will be called the \emph{\arem} of $X$ and
in general has an infinitely divisible distribution (\id) (see
Sato\mycite{Sato}). We will show in the following (see also
Barndorff-Nielsen~\cite{BN98}, Sabino and Cufaro Petroni\mycite{cs20_1} and Sabino\mycite{Sabino20b}) that the
transition law between the times $t$ and $t+\Delta t$ of a
\Levy-driven OU process $X(t)$ essentially coincides indeed with the
law of the \arem\ of its \sd-stationary distribution, provided that
$a=e^{-b\Delta t}$ where $b$ is the OU mean-reversion rate. It is
therefore natural to investigate the properties of the \arem\ of a
certain \sd\ law even irrespective of its possible relation to the
theory of the OU processes.

The first step in this inquiry apparently is the characterization of
the \Levy\ triplet of the \arem\ of a \sd\ law: this would of course
constitute a crucial building block in the construction of a
\Levy-driven OU processes. We will thereafter focus our attention on
the class of the tempered stable (TS) distributions (see for
instance see Rosi\'nski\mycite{ROSINSKI2007677} and
Grabchak\mycite{Grabchak16}) with finite variation, and we will
provide a general framework to derive their transition laws from
their associated \arem s.  There are in fact two standard ways to
associate a TS distribution to an OU process $X(\cdot)$:  if its
stationary law is a TS distribution we will say that $X(\cdot)$ is a
TS-OU process; if on the other hand, $X(\cdot)$ is driven by a TS
background noise we will say that $X(t)$ is a OU-TS process.

Some of the results discussed in the following sections are not new:
Kawai and Masuda\mycite{KawaiMasuda_1, KawaiMasuda_2} and
Zhang\mycite{Zhang11}, for instance, have considered OU processes
whose stationary marginal law is an exponentially-modulated TS
distribution hereafter called a classical TS (CTS), whereas Bianchi
et al.\mycite{BRF17} have taken into account the rapidly decreasing
TS laws (RDTS), and Grabchak\mycite{Grabchak20} finally merged all
these laws in the larger class of the general TS distributions.
Recently Qu et al\mycite{QDZ20} have also studied both CTS-OU and
OU-CTS processes. In this perspective a first contribution of the
present paper consists in harmonizing these results in the scheme of
the \arem s, and in showing the further advantages of this approach:
some of the theorems in the aforementioned literature become indeed
special cases of this proposed comprehensive framework. On the other
hand -- by exploiting properties valid for every \id\ distribution
-- an explicit knowledge of the \Levy\ triplet of the \arem\ makes
very easy and straightforward the calculation of the cumulants of
the transition law of a OU process. This turns out in particular to
be a remarkable asset in testing the efficiency of the simulation
algorithms and can be adopted for the parameters estimation. It is
also worthwhile remarking that the laws of the \arem s are in fact
\id, and therefore they lend the possibility of producing an entire
new class of associated \Levy\ processes as shown for instance in
Gardini et al.\mycite{Gardini20a, Gardini20b}.

Finally, as done in Qu et al\mycite{QDZ20}, we focus our attention
on the case of the CTS related OU processes with their transition
laws, both for the OU-CTS and the CTS-OU cases, but within the
perspective of the \arem s. We also derive a few new algorithms
intended to simulate the skeleton of such processes. We find in
particular that for the simulation of the CTS-OU processes our
procedure is computationally more efficient than that of
Zhang\mycite{Zhang11} because it does not rely on an acceptance
rejection method (other than that required to draw from a CTS law),
but rather on the inverse method (see Devroye\mycite{Dev86}). We
adopted instead a procedure based on an acceptance rejection method
for the OU-CTS process that however, at variance with that of Qu et
al\mycite{QDZ20}, has the advantage of having an expected number of
iterations before acceptance that can always be kept arbitrarily
close to $1$.

The paper is organized as follows: the
Section\myref{sec:Preliminaries} introduces the notations and the
preliminary notions. In particular it presents the basic properties
of non-Gaussian OU processes and their relation with the \sd\ laws.
In the Section\myref{sec:tsou} we then derive the \Levy\ triplet of
the \arem\ of an arbitrary \sd\ law, and in particular that of the
\arem\ of a general TS distribution with finite variation. These
results are instrumental to explicitly write down the transition law
of CTS-OU process. In the subsequent Section\myref{sec:outs} we
focus on the OU-CTS processes, and in Section\myref{sec:sim} we
present the algorithms for the simulation of the skeleton of both
the CTS-OU and the OU-CTS processes pointing out their differences
and their advantages with respect to the solutions already existing
in the literature. The Section\myref{sec:num:exp} illustrates the
effectiveness of our simulation schemes by comparing the true values
and the Monte Carlo estimated values of the first four cumulants. We
also consider some approximation schemes to further check the
performance of our procedures and we propose a simple approach to
the parameters calibration. Finally the
Section\myref{sec:conclusions} concludes the paper with an overview
of future inquiries and of possible further applications.

\section{Notations and preliminary remarks}\label{sec:Preliminaries}
Take a -- possibly non-Gaussian -- one-dimensional \Levy\ process
$L(\cdot)$, and the Ornstein-Uhlenbeck () process $X(\cdot)$
solution of the stochastic differential equation (\sde)
            \begin{equation}\label{eq:genOU_sde}
              dX(t) =  -bX(t)dt + dL(t) \quad\qquad X(0)=X_0\quad \Pqo\qquad
              b>0
            \end{equation}
to wit
\begin{equation}
X(t) = X_0\,e^{-bt} + Z(t) \qquad\quad Z(t)=\int_0^t
e^{-b\,(t-s)}dL(s) \label{eq:sol:OU}
\end{equation}
Hereafter $L(\cdot)$ will be called \emph{background driving \Levy\
process} ({BDLP}) and $L(t)$ will also represent its stationary
increment of width $t$ that completely define the process, but for
an arbitrary initial condition. It is known that the \chf\
(\emph{characteristic function}) $\varphi_L(u,t)$ of these
increments, and their \lch\ (\emph{logarithmic characteristic})
$\psi_L(u,t)=\ln\varphi_L(u,t)$ are retrievable from a given \id\
(\emph{infinitely divisible}) law with \chf\
$\varphi_L(u)=e^{\psi_L(u)}$ according to
\begin{equation*}
   \varphi_L(u,t)=\varphi_L(u)^{t/T}\qquad\qquad\psi_L(u,t)=\frac{t}{T}\psi_L(u)
\end{equation*}
Here $T$ represents an arbitrary constant time scale introduced to
keep a fair balance among the physical dimensions: for practical
purposes however it is possible to take $T=1$, as we will do later
on in the present paper. Note that here the given \id\
$\varphi_L(u)$ and $\psi_L(u)$ can also be considered as shorthand
notations for $\varphi_L(u,T)$ and $\psi_L(u,T)$, namely the
characteristics of the \rv\ (\emph{random variable}) $L(T)$ famously
eponym of our {BDLP}.

Following a Barndorff-Nielsen and Shephard\mycite{BNSh01}
convention, if $\overline{\mathcal{D}}$ is the law of the stationary
process we will say that $X(\cdot)$ is a $\overline{\mathcal{D}}$-OU
process; when on the other hand the \rv\ $L(T)$ is distributed
according to the \id\ law ${\mathcal{D}}$ we will say that
$X(\cdot)$ is an OU-${\mathcal{D}}$ process. A well-known result
(see for instance Cont and Tankov~\cite{ContTankov2004} or
Sato~\cite{Sato}) states that a distribution
$\overline{\mathcal{D}}$ can be the stationary law of a given
OU-${\mathcal{D}}$ process if and only if $\overline{\mathcal{D}}$
is \emph{self-decomposable} (\sd, see more below). In addition, just
by taking an arbitrary degenerate initial condition $X_0=x_0,\;\Pqo$
-- and if we can manage to retrieve the distribution of its second,
integral term $Z(t)$ -- from the pathwise solution\refeqq{eq:sol:OU}
it is also apparently possible to deduce the transition \pdf\
(\emph{probability density function}) of the Markov process
$X(\cdot)$, and therefore all its distributional details in an
explicit form. It is appropriate to point out moreover that in the
equation\refeqq{eq:sol:OU} we provided the solution in terms of the
original {BDLP} $L(t)$ rather than of the dimensionless time {BDLP}
$L(b\,t)$ as done in Barndorff-Nielsen and Shephard\mycite{BNSh01}:
therefore a few results of ours will turn out to be explicitly
dependent on the parameter $b$. The differences between these two
equivalent representations are also discussed in
Barndorff-Nielsen~\cite{BN98}, Barndorff-Nielsen and
Shephard~\cite{BNSh03a} or Schoutens~\cite{Schoutens03} page 48.

Going back now to the \sde\refeqq{eq:genOU_sde} it is possible to
see (see also Barndorff-Nielsen et al.\mycite{BJS1998}) that the
solution process\refeqq{eq:sol:OU} is stationary if and only if its
\chf\ $\varphi_X(u,t)$ is constant in time and steadily coincides
with the \chf\ $\overline{\varphi}_X(u)$ of the (\sd) invariant
initial distribution that turns out to be decomposable according to
\begin{equation*}
\overline{\varphi}_X(u)=\overline{\varphi}_X(u\,e^{-b\,
t})\varphi_Z(u,t)
\end{equation*}
where now, at every given $t$, $\varphi_Z(u,t)=e^{\psi_Z(u,t)}$
denotes the \id\ \chf\ of the \rv\ $Z(t)$ in\refeqq{eq:sol:OU}. This
last statement apparently means that the law of $Z(t)$ in the
solution\refeqq{eq:sol:OU} coincides with that of the \arem\ of the
\sd, stationary law $\overline{\varphi}_X$ provided that $a=e^{-b\,
t}$, and that moreover we have
\begin{eqnarray}
    \varphi_Z(u,t)&=&\frac{\overline{\varphi}_X(u)}{\overline{\varphi}_X(u\,e^{-b\,
    t})}\label{arem}\\
    \psi_Z(u, t) &=& \overline{\psi}_X(u) - \overline{\psi}_X(u\,e^{-b\,
    t}). \label{eq:ou:cumulants}
\end{eqnarray}
It turns out therefore that studying the transition law of an OU
process essentially amounts to find first its stationary law, and
then the law of its \arem\refeqq{arem}: it is easy indeed to see
from\refeqq{eq:sol:OU} that the \chf\ of the time homogeneous
transition law with a degenerate initial condition $X(0)=x_0,\;\Pqo$
is
\begin{equation}\label{transchf}
    \varphi_X(u,t|x_0)=e^{\,ix_0ue^{-bt}}\varphi_Z(u,t)=\frac{\overline{\varphi}_X(u)\,e^{\,ix_0ue^{-bt}}}{\overline{\varphi}_X(u\,e^{-b\, t})}
\end{equation}
As a consequence one can focus on the properties of the \arem s of
these \sd\ distributions in order to deduce also the transition
\pdf\ of the associated OU processes. On the other hand, since the
law of an \arem\ also is \id, one could even construct its
associated \Levy\ process resulting in an even wider range of
possible models (see for instance Gardini et
al.\mycite{Gardini20b, Gardini20a}).

A number of additional relations between the distribution of the
stationary process, and that of the \rv\ $L(T)$ are known: it is
possible to show for instance that between the \lch's
$\overline{\psi}_X(u)=\ln\overline{\varphi}_X(u)$ of the stationary
distribution, and $\psi_L(u)=\ln\varphi_L(u)$ of $L(T)$ the
following relation holds (see Taufer and Leonenko\mycite{TAUFER2009}
and Schoutens\mycite{Schoutens03})
\begin{equation}\label{statlch}
\overline{\psi}_X(u) = \frac{1}{T}\int_0^{+\infty}
\psi_L(u\,e^{-bs}) ds
\end{equation}
On the other hand, assuming for simplicity that the \Levy\ measure
of $L(T)$ and that of the stationary process admit densities --
respectively denoted as $\nu_L(x)$ and $\overline{\nu}_X(x)$ -- and
supposing that $\overline{\nu}_X(x)$ is differentiable, it also
results (see Sato\mycite{Sato}, Cont and
Tankov\mycite{ContTankov2004})
\begin{align}
  &  \overline{\nu}_X(x) = \frac{U(x)}{Tb\,|x|}
  \qquad\qquad  U(x)=\left\{\begin{array}{ll}
                              \int_{-\infty}^x\nu_L(y)dy & x<0 \\
                              \int_x^{+\infty}\nu_L(y)dy & x>0
                              \end{array}
                     \right.\label{U}   \\
  & \overline{\nu}_X(x) +x
  \overline{\nu}_X '(x)=-\frac{\nu_L(x)}{Tb}\qquad\qquad x\neq0\label{eq:levy:measures:ou}
\end{align}
Taking advantage finally
of\refeqq{eq:ou:cumulants},\refeqq{transchf} and\refeqq{statlch} it
is easy to see that the transition \lch\ of the OU process can also
be written in terms of the corresponding $\psi_L(u)$ of $L(T)$ in
the form
\begin{equation}
    \psi_X(u,t|x_0)=iux_0e^{-bt} + \psi_Z(u,t) = iux_0e^{-b\, t} + \int_0^t\psi_L\left(ue^{-b\,s}\right)ds.
\label{eq:cumulant:function}
\end{equation}
As a consequence we can also calculate the cumulants
$c_{X,k}(x_0,t),\;k= 1,2,\ldots$ of $X(t)$ for $X_0=x_0$ from the
cumulants $c_{L,k}$ of $L(T)$ according to
\begin{eqnarray}
c_{X,1}(x_0,t) &=&\EXP{X(t)|X_0=x_0}\;=\; x_0e^{-b\, t} + \frac{c_{L,1}}{b\,T}\left(1-e^{-b\, t}\right),\qquad k=1\label{eq:cumulants:ou1}\\
c_{X,k}(x_0,t) &=& \frac{c_{L,k}}{k\,b\,T}\left(1-e^{-k\,b\,
t}\right), \qquad\qquad\qquad\qquad\qquad\qquad
k=2,3,\ldots\label{eq:cumulants:ou2}
\end{eqnarray}
On the other hand according to\refeqq{eq:ou:cumulants} the said
cumulants $c_{X,k}(x_0,t),\;k= 1,2,\ldots$ for $X_0=x_0$ can also be
derived from those of the stationary law here denoted
$c_{\overline{X},k}$
\begin{eqnarray}
c_{X,1}(x_0,t) &=&\EXP{X(t)|X_0=x_0}\;=\; x_0e^{-b\, t} + c_{\overline{X},1}\left(1-e^{-b\, t}\right),\qquad k=1\label{eq:cumulants:ou3}\\
c_{X,k}(x_0,t) &=& c_{\overline{X},k}\left(1-e^{-k\,b\,t}\right),
\qquad\qquad\qquad\qquad\qquad k=2,3,\ldots\label{eq:cumulants:ou4}
\end{eqnarray}
These quantities can be used both as benchmarks to test the
performance of the simulation algorithms, and to carry out an
estimation procedure based on the generalized method of moments.

By summarizing (see also Sabino\mycite{Sabino20b}), determining the
transition law of the OU processes $X(\cdot)$ consists of two steps,
that of course can also be used to produce a simulation algorithm:
\begin{itemize}
    \item given the {BDLP} $L(\cdot)$, find the the stationary distribution
    $\overline{\varphi}_X(u)$ of\refeqq{eq:genOU_sde};
    \item from\refeqq{arem} find the distribution of its \arem\ $Z(\cdot)$ with $a = e^{-b\,t}$.
\end{itemize}
In particular, the sequential generation of the process skeleton on
a time grid $t_0, t_1, \dots, t_M$ will consist now in finding a
simulation algorithm for the \arem\ $Z(\cdot)$ of the stationary
law: assuming indeed at each step $a_i=e^{-b(t_{i}-t_{i-1})},\; i=0,
\dots, M$, we just implement the following recursive procedure with
initial condition $X(t_0)=x_0$:
\begin{equation}
X(t_{i}) = a_i X(t_{i-1}) + Z_{a_i}, \quad\qquad i=1,\dots, M.
\label{eq:ou:recursion}
\end{equation}
where from\refeqq{arem} $Z_{a_i}$ are \rv's with \chf's
\begin{equation*}
    \chi_i(u,t)=\frac{\overline{\varphi}_X(u)}{\overline{\varphi}_X(u\,e^{-b(t_{i}-t_{i-1})})}
\end{equation*}
The previous equations suggest in some way two procedures to derive
the properties of the transition law of an OU process: the first
makes a start from its stationary law, the second from the law of
its {BDLP}. In the following sections we will explore both
directions focusing our attention on the \emph{tempered stable} laws
({TS}; see Rosi\'nski\mycite{ROSINSKI2007677}) with \emph{finite
variation} (for details see Cont and Tankov\mycite{ContTankov2004}),
and we will analyze both the {TS}-OU and the OU-{TS} processes. To
this end, we recall that the finite variation {TS} laws have \Levy\
densities of the form
\begin{equation}
\nu(x) = c\,\frac{q(x)}{|x|^{1+\alpha}}\qquad \quad c>0, \quad
0\le\alpha<1 \label{eq:ts:levy}
\end{equation}
where the tempering term $q(x)$ with $q(0)=1$ is monotonically
decreasing and $q(+\infty)=0$ for $x>0$, and monotonically
increasing and $q(-\infty)=0$ for $x<0$. We do not adopt however the
full characterization of Rosi\'nski\mycite{ROSINSKI2007677} because
we will focus on one-dimensional laws only, and mainly on the
exponentially modulated {TS}, also known as \emph{classical tempered
stable} laws ({CTS}), where $q(x)=e^{-\beta_1 x}$ for $x\ge 0$,
while $q(x)=e^{\beta_2 x}$ for $x< 0$ with $\beta_1, \beta_2>0$.


\section{The TS distributions and their \arem s}\label{sec:tsou}

In the forthcoming sections we will discuss both the TS-OU and the
OU-TS processes looking in particular to the properties of the
\arem\ of their stationary laws, and we will focus our attention
chiefly on the CTS subfamily. It is worthwhile noticing first that
the study of these processes has extensively been carried on in the
literature and that several types of TS laws have been investigated.
For instance, Kawai and Masuda\mycite{KawaiMasuda_1, KawaiMasuda_2}
and Zhang\mycite{Zhang11} have considered OU processes whose
stationary marginal law is a CTS distribution, whereas Bianchi et
al.\mycite{BRF17} assume a rapidly decreasing TS law (RDTS) and
Grabchak\mycite{Grabchak20} finally harmonizes all these types of
laws considering the larger class of general TS distributions.
Albeit many results can consequently be found in the literature
cited so far, we will nevertheless elaborate a little on this topic
also to show how the proofs of the propositions can be carried out
in a simple way by taking advantage of the properties of the \arem s
and of their \Levy\ triplets. For this purpose let us remember in
particular that, as it is well-known, the \sd\ laws constitute a
subclass of the class of the \id\ distributions having an
absolutely-continuous \Levy\ measure with density
\begin{equation*}
    {\nu}(x)=\frac{k(x)}{|x|}
\end{equation*}
where $k(x)$ is increasing in $(-\infty, 0)$ and decreasing in $(0,
+\infty)$ (see  Cont and Tan\-kov~\cite{ContTankov2004}, Proposition
15.3). Remark then that every TS distribution
satisfying\refeqq{eq:ts:levy} also is \sd. The law of the \arem\ of
a \sd\ law is \id\ too (see Sato \cite{Sato}) and the following
proposition characterizes it in terms of its \Levy\ triplet.
\begin{prop}\label{prop:sdpp}
Consider a \sd\ law  with  \Levy\ triplet $(\gamma, \sigma, \nu)$,
then for every $0<a<1$ the law of its \arem\ has \Levy\ triplet
$(\gamma_a, \sigma_a, \nu_a)$:
    \begin{eqnarray}
    \gamma_a &=& \gamma(1-a) - a\int_{\mathbb{R}}\sign(x) (\mathds{1}_{|x|\le \frac{1}{a}} - \mathds{1}_{|x|\le 1})k(x)\,dx \\
    \sigma_a &=& \sigma\sqrt{(1-a^2)} \\
     \nu_a(x) &=& \frac{k(x)-k(^x/_a)}{|x|}\;=\;\nu(x)-\frac{\nu\left(^x/_a\right)}{a}\label{eq:levy:law++}
    \end{eqnarray}
\end{prop}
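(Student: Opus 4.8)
The plan is to argue entirely at the level of the logarithmic characteristic. Writing $\psi(u)=\ln\eta(u)$ for the \sd\ law and $\psi_a(u)=\ln\chi_a(u)$ for its \arem, the defining relation\refeqq{aremchf} gives at once
\begin{equation*}
\psi_a(u)=\psi(u)-\psi(au).
\end{equation*}
I would then insert the \LKh\ representation
\begin{equation*}
\psi(u)=i\gamma u-\frac{\sigma^2u^2}{2}+\totint{\left(e^{iux}-1-iux\,\mathds{1}_{|x|\le1}\right)\nu(x)}{x}
\end{equation*}
into both terms on the right-hand side and collect the outcome back into the canonical \LKh\ form of $\psi_a$, so that the triplet $(\gamma_a,\sigma_a,\nu_a)$ can be read off by inspection. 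The Gaussian part is immediate, since the quadratic terms combine as $-\tfrac{\sigma^2u^2}{2}+\tfrac{\sigma^2a^2u^2}{2}=-\tfrac{\sigma^2(1-a^2)u^2}{2}$, giving $\sigma_a=\sigma\sqrt{1-a^2}$.

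For the \Levy\ measure I would perform the substitution $y=ax$ in the integral coming from $\psi(au)$. The jump factor $e^{iaux}-1$ becomes $e^{iuy}-1$, while $\nu(x)\,dx$ turns into $a^{-1}\nu(y/a)\,dy$, so that the coefficient of $e^{iux}-1$ in $\psi_a$ is exactly $\nu(x)-a^{-1}\nu(x/a)$, which I identify as $\nu_a(x)$. Using $\nu(x)=k(x)/|x|$ together with $a^{-1}\nu(x/a)=k(x/a)/|x|$ then yields the compact form\refeqq{eq:levy:law++}. I would also remark that $\nu_a\ge0$: for $x>0$ one has $x/a>x$ with $k$ decreasing, and for $x<0$ one has $x/a<x$ with $k$ increasing, so in both cases $k(x)-k(x/a)\ge0$; this confirms that $\nu_a$ is a genuine \Levy\ density and that the \arem\ is indeed \id.

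The delicate point is the drift, because the compensator $-iux\,\mathds{1}_{|x|\le1}$ does not transform covariantly under $y=ax$: the truncation attached to the $\nu(x/a)$ term becomes $\mathds{1}_{|x|\le a}$ rather than the $\mathds{1}_{|x|\le1}$ demanded by the canonical representation built on $\nu_a$. Forcing the standard truncation back onto this term therefore adds to the naive drift $\gamma(1-a)$ the correction
\begin{equation*}
-\totint{\sign(x)\,k(x/a)\left(\mathds{1}_{|x|\le1}-\mathds{1}_{|x|\le a}\right)}{x},
\end{equation*}
where I have used $x\,a^{-1}\nu(x/a)=\sign(x)k(x/a)$. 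A final substitution $y=x/a$ — which maps the annulus $\{a<|x|\le1\}$ onto $\{1<|y|\le1/a\}$ and contributes a Jacobian factor $a$ — recasts this correction into precisely the expression stated for $\gamma_a$. Since $k$ is monotone on each half-line and hence bounded on the set $\{1<|y|\le1/a\}$, which is bounded away from both the origin and infinity, the correction integral is finite and no convergence issue arises; carefully tracking this truncation mismatch is the only real subtlety in the argument.
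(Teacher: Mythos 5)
Your proposal is correct and follows essentially the same route as the paper: both insert the L\'evy--Khintchin representation into $\psi_a(u)=\psi(u)-\psi(au)$, rescale the jump variable by $a$, and absorb the resulting truncation mismatch into the drift (the paper pre-adjusts the truncation of the representation used for $\psi(au)$ to $\mathds{1}_{|x|\le 1/a}$ before substituting, whereas you substitute first and correct afterward — the same computation in a different order). The only cosmetic omission is that you justify $\nu_a\ge 0$ but not the integrability $\int(1\wedge x^2)\nu_a(x)\,dx<+\infty$, which however is immediate from $0\le\nu_a\le\nu$.
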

\begin{proof}
The \LKh\ representation of the \lch\ of our \sd\ law can be given
in two equivalent ways just by redefining the drift term:
    \begin{eqnarray}
    \psi(u) &=& \left\{
    \begin{array}{l}
         iu\gamma - \frac{1}{2}\sigma^2u^2 + \int_{\mathbb{R}}\left(e^{iux}-1 - iux\mathds{1}_{|x|\le 1}\right)\frac{k(x)}{|x|}dx
         \\
         iu\gamma' - \frac{1}{2}\sigma^2u^2 + \int_{\mathbb{R}}\left(e^{iux'}-1 - iux'\mathds{1}_{|x'|\le\frac{1}{a}}\right)\frac{k(x')}{|x'|}dx'
    \end{array}
    \right.\label{eq:laplace:sub++}\\
    \gamma' &=& \gamma + \int_{\mathbb{R}}\sign (x)(\mathds{1}_{|x|\le \frac{1}{a}} - \mathds{1}_{|x|\le
    1})\,k(x)\,dx\nonumber
    \end{eqnarray}
Therefore, using both the representations\refeqq{eq:laplace:sub++}
with the change of variable $x = ax'$ in the second integral, the
\lch\ of the \arem\ $\psi_a(u)=\psi(u) - \psi(au)$ becomes
    \begin{eqnarray*}
        \psi_a(u) &=& iu\gamma(1-a)  - a\,\int_{\mathbb{R}}\sign (x)(\mathds{1}_{|x|\le \frac{1}{a}} - \mathds{1}_{|x|\le 1})\,k(x)\,dx\\
        &&\qquad\qquad-\frac{\sigma^2(1-a^2)u^2}{2}+\int_{\mathbb{R}}\left(e^{iux}-1 - iux\mathds{1}_{|x|\le 1}\right)\frac{k(x) - k(^x/_a)}{|x|}dx
    \end{eqnarray*}
Due to the properties of $k(x)$ it turns out that $k(x)-k(^x/_a)>0$
for every $x$ and for every $0<a<1$; and, as it also happens that
with  $\nu_a(x)$ defined in\refeqq{eq:levy:law++} we have
\begin{equation*}
    \int_{\mathbb{R}}(1\wedge x^2)\nu_a(x)dx<+\infty
\end{equation*}
it is easy to see that $\nu_a(x)$ qualify as a \Levy\ measure. Then
$(\gamma_a, \sigma_a, \nu_a)$ represents the legitimate \Levy\
triplet of the law of the \arem\ of a \sd\ law.
\end{proof}
In the context of the OU processes where the law of $Z(t)$
in\refeqq{eq:sol:OU} at the time $t$ is the \arem\ of the stationary
law for $a=e^{-bt}$, the Proposition~\ref{prop:sdpp} along with the
equation\refeqq{U} enables us to connect the \Levy\ density
$\nu_Z(x,t)$ of the \id\ \rv\ $Z(t)$ at the time $t$ to
$\overline{\nu}_X(x)$ and $\nu_L(x)$, the \Levy\ densities
respectively of the \sd\ stationary law and of the BDLP $L(T)$ at
time $T$:
\begin{eqnarray}
    \nu_Z(x,t) &=&
    \frac{\overline{k}_X(x) - \overline{k}_X(^x/_a)}{|x|}\;=\; \overline{\nu}_X(x) - \frac{\overline{\nu}_X(^x/_a)}{a}
               \qquad\qquad\qquad a = e^{-b\, t}
    \nonumber\\
     &=& \frac{U(x)-U\left(^x/_a\right)}{Tb\,|x|}\;=\;\frac{1}{Tb\,|x|}
              \left\{
                   \begin{array}{ll}
                   \int_{\,^x\!/_a}^x \nu_L(y)dy   & \;\hbox{$x<0$} \\
                                 \\
                   \int_x^{\,^x\!/_a} \nu_L(y)dy   & \;\hbox{$x>0$}
                   \end{array}
             \right.
    \label{eq:levy:measures:ou++_Z}
\label{eq:levy:measures:ou++}\label{eq:levy:measures:ou++_X}
\end{eqnarray}
According to the previous representations we can therefore adopt one
of two possible strategies to study the properties of the transition
law of a \Levy-driven OU process: the first based on the stationary
law and more suitable for a $\overline{\mathcal{D}}$-OU process; the
second using the distribution of the BDLP and more suitable for an
OU-$\mathcal{D}$ process.

A \Levy\ process is said to be of \emph{finite variation} when its
trajectories are of finite variation with probability $1$, and it is
possible to prove (Cont and Tankov~\cite{ContTankov2004}) that this
happens if and only if its characteristic triplet $(\gamma, \sigma,
\nu)$ satisfies the conditions
\begin{equation}
     \sigma=0, \qquad\qquad
     \int_{|x|\le1}x\,\nu(x)\,dx< +\infty\label{fvdef}
\end{equation}
An important subclass of such processes is that of
\emph{subordinators} that are \Levy\ processes with almost surely
non-decreasing sample paths: in this event their L\'evy triplet must
satisfy the conditions
\begin{equation}
    \sigma=0, \qquad\qquad\nu(x)=0\qquad x<0,\qquad\qquad
     \int_{0}^{+\infty}(x\wedge 1)\,\nu(x)\,dx< +\infty\label{subdef}
\end{equation}
As a matter of fact it would also be easy to see from the
L\'evy-Khintchin characterization theorem that any process of finite
variation can be written as the difference of two independent
subordinators -- for instance the Variance Gamma (VG) processes can
be represented as the difference of two Gamma processes -- and
therefore, without loss of generality, we can focus our attention on
subordinators only. Without presuming now that the actual processes
involved are of finite variation, subordinators or even L\'evy
processes, we will say hereafter for short that an \id\ law with
L\'evy triplet $(\gamma, \sigma, \nu)$ is of finite variation when
it satisfies the conditions\refeqq{fvdef}, and is a subordinator
when it satisfies the conditions\refeqq{subdef}. Finally we will
assume for simplicity $\gamma=0$ because a non zero value would
correspond just to a constant shift.


\begin{prop}\label{prop:sub_hom}
Consider a TS law with \Levy\ triplet $(0,0, \nu)$ such that
$\nu(x)=0$ for $x<0$ and
    \begin{equation}
    \nu(x)=c\,\frac{q(x)}{x^{1+\alpha}}\quad\qquad x>0,\qquad c\ge 0, \qquad 0\le\alpha< 1
    \label{eq:hom}
    \end{equation}
\noindent where $q(x)$ is a tempering function such that  $q(x) - q(\gamma\, x)=o(x^{\alpha})$, $x\rightarrow 0^+$ for every $\gamma>0$. Then the \Levy\ density of its \arem\ is
    \begin{equation}
    \nu_a(x) = \nu_1(x) + \nu_2(x),\quad\left\{
                                          \begin{array}{l}
                                            \nu_1(x)=c\,(1-a^\alpha)\frac{q(x)}{x^{1+\alpha}} \\
                                              \\
                                            \nu_2(x)=c\,a^\alpha\,\frac{q(x)\,-\,q(\,^x\!/_a)}{x^{1+\alpha}}
                                          \end{array}
                                        \right.
    \quad\lambda_a=\!\!\int_0^{+\infty}\!\!\!\!\nu_2(x)dx<+\infty
    \label{eq:levy:hom_arem}
    \end{equation}
\end{prop}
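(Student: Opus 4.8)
The plan is to derive everything from Proposition~\ref{prop:sdpp}, which already gives the \Levy\ density of the $a$-remainder of an arbitrary \sd\ law in the form~\eqref{eq:levy:law++}, namely $\nu_a(x)=\nu(x)-\nu(x/a)/a$. Writing the TS density~\eqref{eq:hom} as $\nu(x)=k(x)/x$ with $k(x)=c\,q(x)/x^{\alpha}$ (so that $q(0)=1$ makes $k$ the monotone profile required for self-decomposability), I would first substitute this explicit form into~\eqref{eq:levy:law++}. Since $\nu(x)=0$ for $x<0$ forces $\nu_a(x)=0$ for $x<0$ as well, the remainder stays supported on the half-line, and a one-line computation gives
\[
\nu_a(x)=\frac{c}{x^{1+\alpha}}\bigl[q(x)-a^{\alpha}q(x/a)\bigr],\qquad x>0 .
\]

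The decomposition $\nu_a=\nu_1+\nu_2$ is then purely algebraic: I would split the bracket as $q(x)-a^{\alpha}q(x/a)=(1-a^{\alpha})q(x)+a^{\alpha}\bigl(q(x)-q(x/a)\bigr)$, which immediately produces the two stated densities. The first piece $\nu_1(x)=c(1-a^{\alpha})q(x)/x^{1+\alpha}=(1-a^{\alpha})\nu(x)$ is merely a positive multiple of the original TS density; it is therefore automatically a legitimate \Levy\ density and it carries the whole non-integrable $x^{-1-\alpha}$ singularity at the origin. All the real content of the proposition thus sits in the claim that the complementary piece $\nu_2=\nu_a-\nu_1$ is not just a \Levy\ density but a \emph{finite} measure, i.e. $\lambda_a=\int_0^{+\infty}\nu_2(x)\,dx<+\infty$, so that it represents a compound-Poisson (finite-activity) component.

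To prove $\lambda_a<+\infty$ I would argue separately on $(1,+\infty)$ and on $(0,1)$. Positivity of $\nu_2$ is free: since $0<a<1$ we have $x/a>x$, and the monotonicity of the tempering function gives $q(x)-q(x/a)>0$. On the tail, dropping the subtracted term yields the pointwise bound $\nu_2(x)\le a^{\alpha}\nu(x)$, whence $\int_1^{+\infty}\nu_2\le a^{\alpha}\int_1^{+\infty}\nu<+\infty$ because $\nu$ is a \Levy\ measure. The delicate part is the behaviour near $0$, and this is where the hypothesis $q(x)-q(\gamma x)=o(x^{\alpha})$, applied with $\gamma=1/a$, enters: here both $\int_0^1\nu(x)\,dx$ and $\int_0^1\nu(x/a)/a\,dx$ diverge (TS laws have infinite activity), so finiteness can come only from the cancellation of their singular parts. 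I would make this rigorous by integrating $\nu_2$ by parts with $dv=x^{-1-\alpha}dx$ and $v=-x^{-\alpha}/\alpha$: the boundary term at the origin is $x^{-\alpha}\bigl(q(x)-q(x/a)\bigr)$ up to constants, and it vanishes \emph{exactly} by the $o(x^{\alpha})$ assumption, leaving $(a^{-\alpha}-1)\int_0^{+\infty}x^{-\alpha}\,(-q'(x))\,dx$.

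I expect the near-origin estimate to be the main obstacle. The algebra of the first two paragraphs is routine once Proposition~\ref{prop:sdpp} is invoked, and the tail bound is immediate; but the finiteness of $\lambda_a$ genuinely relies on the singular $x^{-1-\alpha}$ contributions of $\nu(x)$ and $a^{-1}\nu(x/a)$ cancelling to leave an integrable remainder. The assumption on $q$ is precisely the quantitative control that guarantees this cancellation: for the CTS case $q(x)=e^{-\beta x}$ one has $q(x)-q(x/a)=O(x)$, comfortably $o(x^{\alpha})$, and $\nu_2(x)=O(x^{-\alpha})$ is integrable for $\alpha<1$. I would therefore keep careful track of exactly which integrability consequence of the $o(x^{\alpha})$ hypothesis the estimate actually uses, since that condition cleanly kills the boundary term in the integration by parts, after which the convergence of the residual integral of $x^{-\alpha}$ against $-q'$ completes the argument.
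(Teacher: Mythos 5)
Your proposal follows essentially the same route as the paper: it invokes Proposition~\ref{prop:sdpp}, performs the identical algebraic split $q(x)-a^{\alpha}q(x/a)=(1-a^{\alpha})q(x)+a^{\alpha}\bigl(q(x)-q(x/a)\bigr)$, obtains positivity of $\nu_2$ from the monotonicity of $q$, and reduces the whole content to the finiteness of $\lambda_a$, controlled at infinity by the decay of $q$ and at the origin by the $o(x^{\alpha})$ hypothesis. The only difference is cosmetic: the paper checks $\lambda_a<\infty$ by direct order estimates on $\nu_2$ at $0$ and at $\infty$ rather than by your domination-plus-integration-by-parts argument (which, note, tacitly assumes $q$ differentiable and still needs the residual integral $\int_0^{1}x^{-\alpha}(-q'(x))\,dx$ to converge, a point at exactly the same level of rigor as the paper's own $o(x^{-1})$ step).
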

\begin{proof}
As already remarked every TS is \sd\ and, if confined to $x>0$, it
is easy to see that\refeqq{subdef} is satisfied so that the law is a
subordinator and hence also of finite variation. As a consequence
there is an \arem\ and from Proposition\myref{prop:sdpp} we have
\begin{eqnarray*}
\nu_a(x) &=& \nu(x)-\frac{\nu(\,^x/_a)}{a}\;=\;(1 - a^{\alpha}) \nu(x) + a^{\alpha}\nu(x) - \frac{\nu(\,^x/_a)}{a} \\
&=& c\,(1-a^\alpha)\frac{q(x)}{x^{1+\alpha}} + c a^{\alpha}
\frac{q(x) - q(\,^x/_a)}{x^{1+\alpha}}\;=\;\nu_1(x)+\nu_2(x)
\end{eqnarray*}
Now, while $\nu_1(x)$ is just a rescaled form of the original TS
L\'evy measure, we find that $\nu_2(x)>0$ is also normalizable. From
the properties of $q(x)$ we have indeed that
\begin{equation*}
\frac{q(x) - q(\,^x/_a)}{x^{1+\alpha}}=o\,(x^{-1-\alpha}),\qquad\quad
x\rightarrow+\infty,\qquad\quad0<\alpha<1
\end{equation*}
while from our hypothesis  we have
\begin{equation*}
\frac{q(x) - q(\,^x/_a)}{x^{1+\alpha}}=o(x^{-1}), \qquad\quad x\rightarrow 0^+.
\end{equation*}
and the integral in\refeqq{eq:levy:hom_arem} turns out to be finite
\end{proof}
Under the conditions of the Proposition~\ref{prop:sub_hom} we have
found that $\nu_2(x)$ is an integrable non-negative function with
$\Lambda_a$ in\refeqq{eq:levy:hom_arem} playing the role of a
normalization constant, and therefore that
$g_a(x)=\nu_2(x)/\Lambda_a$ can be interpreted a full fledged \pdf.
As a consequence $\nu_2(x)$ can be considered as the L\'evy measure
density of a compound Poisson law of parameter $\Lambda_a$ and jump
length distributed according to the \pdf\ $g_a(x)$. Since on the
other hand $\nu_1(x)$ is the L\'evy measure density of the original
TS law but with the rescaled parameter $c(1-a^\alpha)$, according to
the previous proposition we can claim that the \arem\ $Z_a$ of a TS
law with L\'evy density\refeqq{eq:hom} is -- in distribution -- the
sum $Y_a+C_a$ of two independent \rv's: a TS $Y_a$ of the
type\refeqq{eq:hom} with parameter $c(1-a^\alpha)$, and a compound
Poisson \rv\
\begin{equation*}
    C_a=\sum_{k=1}^{N_a}J_{a,k}
\end{equation*}
where $N_s$ is a Poisson \rv\ of parameter $\Lambda_a$ and
$J_{a,k},\;k=1,2,\ldots$ are a sequence of \iid\ \rv's with \pdf\
$g_a(x)$

As can be seen from\refeqq{transchf} in the
Section~\ref{sec:Preliminaries}, the transition law of a generalized
OU process\refeqq{eq:sol:OU} directly follows from the \arem\ of its
\sd\ stationary distribution when we take $a=e^{-b\, t}$. Adding in
the results of the Proposition~\ref{prop:sub_hom} we can therefore
fully display the L\'evy measure of the said transition laws when
the stationary distribution is a TS subordinator with \Levy\
density\refeqq{eq:hom}. This class of laws is not without merits in
itself and is a relevant one-dimensional subfamily of the general
tempered stable laws discussed in Grabchak \mycite{Grabchak20}. It
includes on the other hand the CTS subordinators with $0\le\alpha<
1$, while in fact the Theorem 1 in Zhang\mycite{ZZhang07} for
Inverse Gaussian-OU processes (IG-OU) and the Theorem 1 in
Zhang\mycite{Zhang11} for TS-OU are special cases of the
Proposition~\ref{prop:sub_hom}. The proposition also covers the
$p\,$TS-OU introduced in Grabchak\mycite{Grabchak16}, the special
case of Rapidly Decreasing TS-OU (RDTS-OU) discussed in Kim et
al\mycite{KRSBF10} and Bianchi et al.\mycite{BRF17}, the Modified
TS-OU (MTS-OU) studied in Kim et al.\mycite{KRSBF09}, and the Bessel
TS-OU (BTS-OU) discussed in Chung\mycite{Chung16}. It is expedient
to notice at this point that, although for a fixed time $t$ the law
of $Z(t)$ defined in\refeqq{eq:sol:OU} is \id\ and coincides with
that of the \arem\ of the stationary law taking $a=e^{-b\,t}$, this
process is not \Levy\ because $a$ changes in time.

The particular case of a TS-OU process with $\alpha=0$ (namely a
$\Gamma$-OU process if the stationary law is a CTS) is also
noteworthy: in this event indeed the BDLP turns out to be just a
compound Poisson process. In fact it is easy to see that, for $x>0$
and $\alpha=0$, from\refeqq{U},\refeqq{eq:levy:measures:ou++_Z}
and\refeqq{eq:hom} it results
\begin{equation*}
\int_x^{+\infty}\nu_L(y)\,dy=U(x) = Tc\,b\,\,q(x)
\end{equation*}
and hence (since $q(0)=1$)
\begin{equation*}
\int_0^{+\infty}\nu_L(x)dx=Tcb<+\infty\qquad\quad
\nu_L(x)=-Tcb\,q'(x)
\end{equation*}
Therefore the BDLP will be a compound Poisson process
\begin{equation}\label{eq:cp:bdlp}
    L(t)=\sum_{k=1}^{N(t)}J_k
\end{equation}
where now $N(t)$ is a Poisson process with intensity $\lambda=c\,b$,
and $J_k$ are \iid\ jumps with \pdf\
\begin{equation}
    f_J(x)=\frac{\nu_L(x)}{\lambda}=-q'(x)\label{fj}
\end{equation}
(remember that $q(x)$ is supposed to be non increasing). Then the
pathwise solution\refeqq{eq:sol:OU} of the OU
equation\refeqq{eq:genOU_sde} becomes now
\begin{equation}
    X(t) =X_0e^{-bt} + Z(t)\qquad\quad Z(t)=\sum_{k=1}^{N(t)}J_k
e^{-b(t-\tau_k)}\label{alpha0}
\end{equation}
where $\tau_k$ represent the jumping times of the Poisson process
$N(t)$. Of course this representation is valid for any BDLP compound Poisson and not only for that in\refeqq{eq:cp:bdlp}.

This type of $Z(t)$ has also interesting financial applications
beyond the context of OU processes: it can indeed describe random
cash-flows occurring at random time-to-maturities with a rate of
return equal to $b$. Remark moreover that -- as observed by
Lawrance\mycite{L80} in the context of Poisson point processes --
for every $t>0$ we have
\begin{equation}
\sum_{k=1}^{N(t)}J_k e^{-b(t-\tau_k)}\;\eqd\; \sum_{k=1}^{N(t)}J_k
e^{-b\,t\,U_k} \label{eq:law}
\end{equation}
irrespective of the law of $J_k$,
where $U_k\sim\unif([0,1])$ are a sequence of \iid\ \rv's uniformly
distributed in $[0, 1]$.


\section{Finite variation CTS-OU processes}
In this section we focus on CTS-OU processes with finite variation, namely the subclass of OU processes with stationary distribution $\cts\big(\alpha, \beta,
c\big)$ with $0\le\alpha < 1$. Such a subclass is especially manageable and 
in this particular case the Proposition\myref{prop:sub_hom}
entails indeed the following result.
\begin{prop}\label{prop:ctsou}
The CTS-OU process $X(t)$ with initial condition $X(0)=X_0,\;\Pqo$,
and with stationary distribution $\cts\big(\alpha, \beta, c\big)$
whose \Levy\ density\refeqq{eq:ts:levy} with $0\le\alpha<1,\;x>0$
has the tempering function $q(x)=e^{-\beta\,x},\;\beta>0$, can be
represented as
\begin{equation}
X(t) \eqd X_0e^{-b\, t} + X_1 + X_2
 \label{eq:prop:ctsou}
\end{equation}
where $X_1$ is again a $\cts\big(\alpha, \beta,
c(1-a^{\alpha})\big)$ with $a=e^{-b\,t}$, and
\begin{equation}
X_2 \eqd\sum_{i=1}^{N_a}\tilde{J}_i
\end{equation}
is a compound Poisson \rv\ with parameter
\begin{equation}\label{eq:lambda:ctsou}
\Lambda_a = c\Gamma\left(1-\alpha\right)\frac{\beta^{\alpha}}{\alpha}(1-a^{\alpha})
\end{equation}
and \iid\ jumps $\tilde{J}_k$, independent from $N_a$ and
distributed according to the \pdf
\begin{equation}\label{eq:fj:cts:ou}
f_J(x)=\frac{\alpha}{a^{-\alpha} -1}
\int_1^{\frac{1}{a}} \frac{(\beta\,v)^{1-\alpha} x^{-\alpha}v^{\alpha-1}}{ \Gamma(1-\alpha)}dv
\end{equation}
\end{prop}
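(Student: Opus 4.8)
The plan is to read the result off directly from Proposition\myref{prop:sub_hom} specialized to the exponential tempering $q(x)=e^{-\beta x}$, and then to make the two summands explicit. First I would verify that $q(x)=e^{-\beta x}$ meets the hypothesis $q(x)-q(\gamma x)=o(x^\alpha)$ as $x\to0^+$: since $e^{-\beta x}-e^{-\beta\gamma x}=\beta(\gamma-1)x+o(x)=O(x)$, this is $o(x^\alpha)$ for every $0\le\alpha<1$, so the proposition applies. Recalling from\refeqq{transchf} that the transition law of the CTS-OU process with a degenerate start coincides, up to the deterministic shift $X_0e^{-bt}$, with the \arem\ of the stationary $\cts(\alpha,\beta,c)$ law evaluated at $a=e^{-bt}$, Proposition\myref{prop:sub_hom} splits that \arem\ into the two \Levy\ densities $\nu_1$ and $\nu_2$ of\refeqq{eq:levy:hom_arem}; since additivity of the \Levy\ measure corresponds to convolution of \id\ laws, the \arem\ is the sum of two \emph{independent} \rv's. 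The piece $\nu_1(x)=c(1-a^\alpha)q(x)/x^{1+\alpha}$ is manifestly the \Levy\ density\refeqq{eq:ts:levy} of a CTS law with rescaled intensity, which identifies $X_1\sim\cts(\alpha,\beta,c(1-a^\alpha))$; the piece $\nu_2$ is integrable and hence generates a compound Poisson \rv\ $X_2$, so all that remains is to evaluate its total mass $\Lambda_a$ and its normalized jump density $f_J=\nu_2/\Lambda_a$.

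The computational core is to turn the difference of exponentials in $\nu_2(x)=ca^\alpha\bigl(e^{-\beta x}-e^{-\beta x/a}\bigr)/x^{1+\alpha}$ into something integrable term by term. The device I would use is the elementary identity $e^{-\beta x}-e^{-\beta x/a}=\beta x\int_1^{1/a}e^{-\beta v x}\,dv$, obtained by writing the difference as the integral of $-\partial_s e^{-sx}$ over $s\in[\beta,\beta/a]$ and substituting $s=\beta v$. Inserting this cancels one power of $x$ and leaves
\[
\nu_2(x)=c\,a^\alpha\,\beta\,x^{-\alpha}\int_1^{1/a}e^{-\beta v x}\,dv ,
\]
which is regular at the origin. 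Integrating in $x$ and swapping the two integrations by Tonelli (everything is non-negative, so the swap is legitimate and simultaneously confirms $\Lambda_a<+\infty$), the inner Gamma integral $\int_0^{+\infty}x^{-\alpha}e^{-\theta x}\,dx=\Gamma(1-\alpha)\theta^{\alpha-1}$ with $\theta=\beta v$ gives $\Lambda_a=c\,a^\alpha\Gamma(1-\alpha)\beta^\alpha\int_1^{1/a}v^{\alpha-1}\,dv$; then $\int_1^{1/a}v^{\alpha-1}\,dv=(a^{-\alpha}-1)/\alpha$ together with $a^\alpha(a^{-\alpha}-1)=1-a^\alpha$ yields exactly\refeqq{eq:lambda:ctsou}.

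Finally I would form $f_J=\nu_2/\Lambda_a$ from the same integral representation. Dividing the displayed $\nu_2$ by $\Lambda_a$ and using $a^\alpha/(1-a^\alpha)=1/(a^{-\alpha}-1)$, the prefactor collapses to $\alpha/(a^{-\alpha}-1)$ and the integrand becomes $\beta^{1-\alpha}x^{-\alpha}e^{-\beta v x}/\Gamma(1-\alpha)$; rewriting $\beta^{1-\alpha}=(\beta v)^{1-\alpha}v^{\alpha-1}$ brings it to the form of\refeqq{eq:fj:cts:ou}, carrying along the tempering factor $e^{-\beta v x}$ that the computation produces. The point I would stress is that the inner term $(\beta v)^{1-\alpha}x^{-\alpha}e^{-\beta v x}/\Gamma(1-\alpha)$ is precisely a Gamma density of shape $1-\alpha$ and rate $\beta v$, while the remaining weight $\alpha v^{\alpha-1}/(a^{-\alpha}-1)$ integrates to $1$ over $[1,1/a]$; thus $f_J$ is a continuous mixture of Gamma densities with a power-law mixing weight on the rate, which is exactly the structure that makes the later inverse-method simulation feasible. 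The main obstacle, and the only genuinely delicate step, is the evaluation of the singular integral for $\Lambda_a$: the integrand of $\nu_2$ is not separately integrable near $x=0$, so the regularizing identity for the difference of exponentials must be introduced before any term-by-term manipulation (it is what legitimizes the Tonelli swap); once that is in place the rest is bookkeeping with the Gamma function. I would also note that $\alpha=0$ is a boundary case, recovered through the limit $(1-a^\alpha)/\alpha\to-\ln a=bt$, in agreement with the compound-Poisson BDLP already described for the $\alpha=0$ ($\Gamma$-OU) process.
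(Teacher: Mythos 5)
Your proposal is correct and follows essentially the same route as the paper: specialize Proposition\myref{prop:sub_hom} to $q(x)=e^{-\beta x}$, identify $\nu_1$ as a rescaled CTS density, and use the identity $e^{-\beta x}-e^{-\beta x/a}=\beta x\int_1^{1/a}e^{-\beta v x}\,dv$ to obtain both $\Lambda_a$ and the gamma-mixture form of $f_J$ (the paper instead quotes a table integral for $\Lambda_a$, but that is the same computation). You also correctly carry along the factor $e^{-\beta v x}$ in the jump density, which is the gamma-kernel ingredient implicit in the statement of\refeqq{eq:fj:cts:ou}.
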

\begin{remark}
The law with \pdf\ $f_J(x)$ coincides with the DTS distribution of
Zhang\mycite{Zhang11}: we show here that this is in fact a mixture
of a gamma laws with parameters $(1-\alpha, \beta V)$ and a random
$V$ distributed according to the \pdf\
\begin{equation}
f_V(v) = \frac{\alpha}{a^{-\alpha}-1}v^{\alpha-1}, \qquad 1\le
v\le\,^1/_a.
\end{equation}
It is easy to verify moreover that
\begin{equation}\label{eq:mix:law:ctsou}
V \eqd \left(1 + \frac{(a^{-\alpha} - 1)\,U}{\alpha}\right)^{\frac{1}{\alpha}},
\end{equation}
where $U\sim\unif(0,1)$ is a uniform \rv, and therefore its
simulation can be based on standard routines. In particular, when
$\alpha=1/2$, $X(t)$ turns out to be an IG-OU process and its
simulation no longer requires now the acceptance-rejection methods
adopted in Zhang\mycite{ZZhang07} and in Qu et al.\mycite{QDZ20},
but can be based on the method illustrated in Michael et
al.\mycite{MSH76} (see also Devroye\mycite{Dev86} page 148).
\end{remark}
\begin{proof}
The law of $Z(t)$ in the pathwise solution\refeqq{eq:sol:OU}
coincides with that of the \arem\ $Z_a$ of the stationary law
$\cts\big(\alpha, \beta, c\big)$ whose \Levy\ density is
\begin{equation*}
    \overline{\nu}_X(x)=c\,\frac{e^{-\beta\,x}}{x^{\alpha+1}}\qquad x>0,\quad
    a=e^{-b\,t}.
\end{equation*}
From the Proposition\myref{prop:sub_hom} we then have
\begin{equation*}
\nu_Z(x, t) =\nu_1(x,t) + \nu_2(x,t)\qquad\left\{
                                           \begin{array}{l}
       \nu_1(x)=\frac{c\,(1-a^{\alpha})}{x^{\alpha+1}}\,e^{-\beta\,x} \\
       \nu_2(x)=\frac{c\,a^{\alpha}}{x^{\alpha+1}}\big( e^{-\beta\,x}-e^{-\frac{\beta\,x}{a}}\big)
                                           \end{array}
                                         \right.
\end{equation*}
where $\nu_1(x)$ apparently corresponds to a $\cts\left(\alpha,
\beta, c(1-a^{\alpha})\right)$ law. As for the second term we have
(see Gradshteyn and Ryzhik\mycite{gradshteyn2007}, 3.434.1)
\begin{equation*}
\Lambda_a = c\,a^{\alpha}\int_0^{\infty}\frac{e^{-\beta\,x} -
e^{-\frac{\beta\,x}{a}}}{x^{\alpha+1}}dx=
\frac{c\,a^{\alpha}}{\alpha}\Gamma\left(1-\alpha\right)\beta^{\alpha}(a^{-\alpha}-1).
\end{equation*}
and therefore $\nu_2(x)$ is associated to the law of a compound
Poisson \rv\ with parameter $\Lambda_a$ and jumps distributed
according to $f_J(x)=\nu_2(x)/\Lambda_a$. On the other hand, since
\begin{equation*}
e^{-\beta\,x} - e^{-\frac{\beta\,x}{a}} = \,\int_0^{\frac{1}{a}} \beta\,x\,e^{-\beta\,v\, x}dv
\end{equation*}
we can also write
\begin{equation*}
f_J(x) = \frac{\alpha\, x^{-\alpha-1}}{\beta^{\alpha}(a^{-\alpha}
-1)\Gamma\left(1-\alpha\right)} \left( e^{-\beta\,x} -
e^{-\frac{\beta\,x}{a}}\right)= \frac{\alpha}{a^{-\alpha} -1}
\int_1^{a} \frac{(\beta\,v)^{1-\alpha} x^{1-\alpha}v^{\alpha-1}}{
\Gamma(1-\alpha)}dv
\end{equation*}
and this concludes the proof.
\end{proof}

\section{Finite Variation OU-CTS processes}\label{sec:outs}

In this section we will consider \Levy-driven OU processes whose
BDLP is a CTS process with $q(x)=e^{-\beta x}$. In this specific
case, the stationary law is not known in an explicit form (see for
instance Table 2 in Barndorff-Nielsen and Shephard\mycite{BNSh03a}),
but according to our discussion in the
Section\myref{sec:Preliminaries} the transition law of the
solution\refeqq{eq:sol:OU} of the equation\refeqq{eq:genOU_sde} can
nevertheless be retrieved through the formula\refeqq{transchf} if
the law of $Z(t)$ is known (remember that $Z(t)$ remains the same
for every initial condition). On the other hand we have shown that
the formula\refeqq{eq:levy:measures:ou++_Z} enables us to deduce the
\Levy\ measure density of $Z(t)$ at a given $t$ directly from the
\Levy\ measure density of the BDLP. We will show thus in the present
section that the transition law of our OU-CTS process is the
convolution of a CTS law (with parameters different from that of the
BDLP) and a compound Poisson law. In the following, we will denote
$\cts(\alpha, \beta, c)$ a CTS law with a \Levy\
density\refeqq{eq:hom} and $q(x)=e^{-\beta x}$.
\begin{prop}\label{prop:ou:ts}
For $0\le \alpha < 1 $, and at every $t>0$, the pathwise
solution\refeqq{eq:sol:OU} of an OU-CTS
equation\refeqq{eq:genOU_sde} with $X(0)=X_0,\;\Pqo$ is in
distribution the sum of three independent \rv's
\begin{equation}
X(t)\, \eqd\, aX_0+X_1 + X_2 \qquad\quad a=e^{-b\,
t}\label{eq:prop:outs_1}
\end{equation}
where $X_1$ is distributed according to the law $\cts\!\left(\alpha,
\frac{\beta}{a}, c\,\frac{1 - a^\alpha}{T\alpha\, b}\right)$, while
\begin{equation*}
X_2=\sum_{k=1}^{N_a} J_k
\end{equation*}
is a compound Poisson \rv\ where $N_a$ is an independent Poisson
\rv\ with parameter
\begin{equation}
 \Lambda_a =
 \frac{c\,\beta^\alpha\Gamma(1-\alpha)}{Tb\,\alpha^2a^\alpha}\,\left(1-a^\alpha+a^\alpha\log a^\alpha\right) \label{eq:outs:intensity}
\end{equation}
and $J_k$ are \iid\ \rv's with \pdf\
\begin{equation}
 f_J(x)=\frac{\alpha\, a^\alpha}{1-a^\alpha+a^\alpha\log a^\alpha} \int_1^{\frac{1}{a}}\frac{x^{-\alpha} \left(\beta\,
  v\right)^{1-\alpha}e^{-\beta v\, x}}{\Gamma(1-\alpha)}\, \frac{v^\alpha-1}{v}\, dv \label{eq:outs:jumps}
\end{equation}
\end{prop}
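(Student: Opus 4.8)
The plan is to work directly with the BDLP formulation, using equation~\refeqq{eq:levy:measures:ou++_Z}, which expresses the \Levy\ density $\nu_Z(x,t)$ of the transition \rv\ $Z(t)$ as an integral of the BDLP \Levy\ density $\nu_L(y)$ against $y$ over the interval from $x$ to $x/a$ (for $x>0$). Here the BDLP is a $\cts(\alpha,\beta,c)$ subordinator, so its \Levy\ density is $\nu_L(y)=c\,e^{-\beta y}/y^{1+\alpha}$ for $y>0$. The immediate goal is to compute this integral explicitly and then split the resulting $\nu_Z(x,t)$ into a piece $\nu_1(x)$ that is recognizably a rescaled CTS \Levy\ density, plus a remainder $\nu_2(x)$ that is integrable and hence the \Levy\ density of a compound Poisson part, exactly as in the decomposition of Proposition~\myref{prop:sub_hom}.

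First I would carry out the inner integration. By\refeqq{U} we have $U(x)=\int_x^{+\infty}\nu_L(y)\,dy$, and then\refeqq{eq:levy:measures:ou++_Z} gives $\nu_Z(x,t)=\frac{1}{Tb\,x}\int_x^{x/a}\nu_L(y)\,dy=\frac{c}{Tb\,x}\int_x^{x/a}\frac{e^{-\beta y}}{y^{1+\alpha}}\,dy$. The cleanest route is the substitution $y=v\,x$ with $v$ ranging over $[1,1/a]$, which converts the integral into $\frac{c}{Tb\,x}\int_1^{1/a}\frac{e^{-\beta v x}}{(vx)^{1+\alpha}}x\,dv = \frac{c}{Tb\,x^{1+\alpha}}\int_1^{1/a}\frac{e^{-\beta v x}}{v^{1+\alpha}}\,dv$. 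This already exhibits the characteristic $x^{-(1+\alpha)}$ prefactor of a CTS density, with the $v$-integral carrying the tempering. The next step is to separate off the $\cts$ term: I would add and subtract the $v=1$ boundary contribution, or equivalently isolate the leading behaviour, writing $\nu_Z(x,t)=\nu_1(x)+\nu_2(x)$ where $\nu_1(x)=c\,\frac{1-a^\alpha}{Tb\,\alpha}\,\frac{e^{-\beta x/a}}{x^{1+\alpha}}$ matches a $\cts(\alpha,\beta/a,\,c\frac{1-a^\alpha}{T\alpha b})$ law — note the tempering rate is $\beta/a$ here, reflecting the innermost endpoint of the integration — and $\nu_2(x)$ is the integrable remainder.

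I would then verify normalizability of $\nu_2$ and compute the compound-Poisson intensity $\Lambda_a=\int_0^{+\infty}\nu_2(x)\,dx$. The cleanest way is to interchange the $x$- and $v$-integrations (Tonelli applies since the integrand is non-negative), performing the $x$-integral $\int_0^{+\infty}x^{-\alpha}e^{-\beta v x}\,dx=\Gamma(1-\alpha)(\beta v)^{\alpha-1}$ first, and then the remaining $v$-integral over $[1,1/a]$ of a power-type integrand, which produces the factor $1-a^\alpha+a^\alpha\log a^\alpha$ in\refeqq{eq:outs:intensity}. The jump density\refeqq{eq:outs:jumps} then follows as $f_J(x)=\nu_2(x)/\Lambda_a$, and I would display it by again factoring $x^{-\alpha}(\beta v)^{1-\alpha}e^{-\beta v x}/\Gamma(1-\alpha)$ against the $v$-weight $(v^\alpha-1)/v$, matching the stated form. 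Finally, combining $\nu_1$ and $\nu_2$ with the general principle\refeqq{transchf}--\refeqq{eq:ou:recursion} that the transition law is the \arem\ law, together with the degenerate-initial-condition shift $aX_0$ from\refeqq{transchf}, yields the three-fold independent decomposition\refeqq{eq:prop:outs_1}.

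The main obstacle I anticipate is the bookkeeping in the split $\nu_Z=\nu_1+\nu_2$: one must choose the CTS piece $\nu_1$ so that the remainder $\nu_2$ is genuinely integrable at the origin, since the raw $v$-integral behaves like $x^{-(1+\alpha)}$ as $x\to0^+$ and is \emph{not} integrable on its own. The correct choice subtracts precisely the $e^{-\beta x/a}$ (innermost-endpoint) behaviour, leaving $\nu_2(x)\propto x^{-(1+\alpha)}\int_1^{1/a}(e^{-\beta v x}-e^{-\beta x/a})v^{-1-\alpha}\,dv$, whose small-$x$ integrand vanishes fast enough to secure finiteness of $\Lambda_a$. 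Getting this subtraction right — and confirming it simultaneously with the explicit evaluation of $\Lambda_a$ — is the delicate part; the two \Levy-measure integrals are otherwise routine applications of the formula $\int_0^\infty x^{s-1}e^{-px}\,dx=\Gamma(s)p^{-s}$ and the tabulated integral in Gradshteyn and Ryzhik\mycite{gradshteyn2007}.
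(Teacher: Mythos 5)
Your proposal is correct and follows essentially the same route as the paper's proof: the same substitution $y=vx$ in\refeqq{eq:levy:measures:ou++_Z}, the same split $\nu_Z=\nu_1+\nu_2$ obtained by subtracting the $e^{-\beta x/a}$ endpoint contribution so that the remainder is integrable, and the same interchange of integrations (with the Gradshteyn--Ryzhik evaluation) to obtain $\Lambda_a$ and the gamma-mixture form of $f_J$. The only immaterial difference is the order in which the double integral for $\Lambda_a$ is evaluated.
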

\begin{remark}\label{rem:ou:cts:mixture}
The \pdf\ $f_J(x)$\refeqq{eq:outs:jumps} can be seen as a mixture of
the gamma laws $\gam(1-\alpha,\beta V)$ with a random rate parameter
$V$ distributed according to the \pdf\
\begin{equation}
  f_V(v) = \frac{\alpha\, a^\alpha}{1-a^\alpha+a^\alpha\log
    a^\alpha}\, \frac{v^\alpha-1}{v}\qquad\quad 1\le v\le\,^1/_a\label{eq:ou:cts:mixture:rate}
\end{equation}
which is correctly normalized. Proposition\myref{prop:ou:ts} covers
the case of a OU-gamma process illustrated in Qu et
al.\mycite{QDZ19} when $\alpha$ tends to zero: we have indeed
\begin{equation*}
    \lim_{\alpha\rightarrow 0^+} \Lambda = \lim_{\alpha\rightarrow 0^+}\frac{c\,\beta^\alpha\Gamma(1-\alpha)}{Tb\,\alpha^2a^\alpha}\,\left(1-a^\alpha+a^\alpha\log a^\alpha\right)
    =  \frac{c\,\log^2 a}{2\,Tb}
\end{equation*}
and therefore, replacing $a=e^{-b\, t}$, we retrieve the equation
4.11 in Qu et al.\mycite{QDZ19}. Similarly, for $\alpha\rightarrow
0^+$ $f_J(x)$ coincides with the equation 4.9 in Qu et
al\mycite{QDZ19}, and can be seen as the \pdf\ of an exponentially
distributed \rv\ $\gam(1,\beta V)$ with a random rate parameter with
the \pdf\ (see the proof of the Theorem 4.1 in Qu et
al.\mycite{QDZ19})
\begin{equation*}
    \lim_{\alpha\rightarrow 0^+} f_V(v) = \frac{2\log v}{v\log^2 a}\qquad\quad 1\le
    v\le\,^1/_a
\end{equation*}
\end{remark}
\begin{proof}
Based on equation\refeqq{eq:levy:measures:ou++_Z} and with the
change of variable $y=wx$, the \Levy\ density of the term $Z(t)$ in
the pathwise solution\refeqq{eq:sol:OU} of an OU-CTS process is
(remember that the coefficient $a=e^{-bt}$ is time dependent)
\begin{eqnarray*}
 \nu_Z(x,t) &=& \frac{c}{Tb\,x}\int_{x}^{\frac{x}{a}}\frac{e^{-\beta y}}{y^{\alpha + 1}}\,dy
             \;=\; \frac{c}{Tb}\int_{1}^{\frac{1}{a}}\frac{e^{-\beta wx}}{x^{\alpha + 1}\, w^{\alpha + 1}}\,dw \\
  &=& \frac{c\,e^{-\frac{\beta}{a}x}}{Tb\,x^{\alpha+1}}\int_{1}^{\frac{1}{a}}\frac{dw}{w^{\alpha + 1}} +
\frac{c}{Tb}\int_{1}^{\frac{1}{a}}\frac{e^{-\beta wx} -
e^{-\frac{\beta}{a}x}}{x^{\alpha + 1}\, w^{\alpha +
1}}\,dw\;=\;\nu_1(x) + \nu_2(x).
\end{eqnarray*}
The first term apparently is the \Levy\ density of a CTS law
$\cts\!\left(\alpha, \frac{\beta}{a}, c\,\frac{1 -
a^\alpha}{T\alpha\, b}\right)$ because it is easy to see that
\begin{equation*}
\nu_1(x) = \frac{c\left(1 - a^\alpha\right) e^{-\frac{\beta
x}{a}}}{Tb\,\alpha\, x^{1+\alpha}}
\end{equation*}
On the other hand $\nu_2(x)>0$ for every $x>0$ because $e^{-\beta
wx}- e^{-\frac{\beta}{a}x}>0$ when $0<w<\,\!^1/_a$, and moreover
with $v=aw$ we find (see 3.434.1 in Gradshteyn and
Ryzhik\mycite{gradshteyn2007})
\begin{eqnarray*}
\Lambda_a &=& \int_{0}^{\infty}\nu_2(x)\,dx =
\int_{0}^{\infty}\frac{c}{Tb}\,dx\int_{1}^{\frac{1}{a}}
      \frac{e^{-\beta wx}- e^{-\frac{\beta}{a}x}}{x^{\alpha + 1}\, w^{\alpha + 1}}\,dw\\
  &=& \frac{c}{Tb} \int_{1}^{\frac{1}{a}}\frac{dw}{w^{\alpha + 1}}
             \int_{0}^{\infty}\frac{e^{-\beta wx} -e^{-\frac{\beta}{a}x}}{x^{\alpha + 1}}dx
       =\frac{c}{Tb} \int_{1}^{\frac{1}{a}}\frac{dw}{w^{\alpha +
   1}}\frac{\beta^\alpha\Gamma(1-\alpha)}{\alpha}(a^{-\alpha}-w^\alpha)\\
  &=&\frac{c\,\beta^\alpha\Gamma(1-\alpha)}{Tb\,\alpha}\int_a^1\frac{1-v^\alpha}{v^{1+\alpha}}\,dv
        =\frac{c\,\beta^\alpha\Gamma(1-\alpha)}{Tb\,\alpha^2a^\alpha}\,\left(1-a^\alpha+a^\alpha\log a^\alpha\right)
\end{eqnarray*}
where apparently $0<\Lambda_a<+\infty$. As a consequence
\begin{equation}\label{fJint}
 f_J(x)=\frac{\nu_2(x)}{\Lambda_a}=\frac{\alpha^2a^\alpha}{\left(1-a^\alpha+a^\alpha\log a^\alpha\right)\beta^{\alpha}\Gamma(1-\alpha)}
          \int_1^{\frac{1}{a}}\frac{e^{-\beta w\,x} - e^{-\frac{\beta x}{a}}}{x^{1+\alpha}w^{1+\alpha}}dw
\end{equation}
is a good \pdf\ and then, $\nu_2(x)$ represents the \Levy\ density
of a compound Poisson law with parameter $\Lambda_a$ and jumps
distributed according to the \pdf\ $f_J(x)$. It would be possible to
show now (see 3.381.3 in Gradshteyn and
Ryzhik\mycite{gradshteyn2007}) that
\begin{equation*}
 f_J(x)=\frac{\alpha^2a^\alpha}{\left(1-a^\alpha+a^\alpha\log
  a^\alpha\right)\Gamma(1-\alpha)}\left[\frac{\Gamma(-\alpha,\beta x)-\Gamma\left(-\alpha,\,^{\beta x}/_a\right)}{x}
   -\frac{1-a^\alpha}{\alpha\beta^\alpha}\,\frac{e^{-\frac{\beta x}{a}}}{x^{1+\alpha}}\right]
\end{equation*}
where $\Gamma(\gamma,z)$ is the incomplete gamma function. This
\pdf\ has a typical gamma-like behavior with
$f_J(x)=O(x^{-\alpha}),\;x\to0^+$ as can be seen from the
Figure\myref{fJ}.
\begin{figure}
\caption{Gamma-like behavior of the jumps \pdf\ $f_J(x)$ with
$\alpha=\,^1/_2,\;\beta=1,\;a=\,^1/_2$. The dashed curve shows that
$f_J(x)=O(x^{-\alpha}),\;x\to0^+$.}\label{fJ}
\includegraphics[width=9cm]{./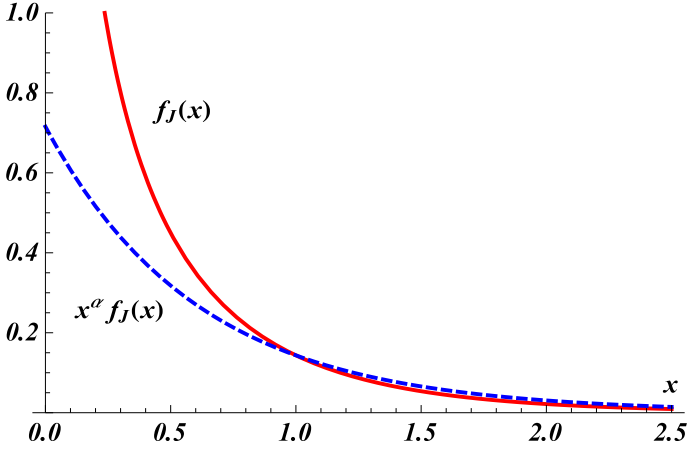}
\end{figure}
For later computational convenience however we prefer to give an
alternative representation of this jumps distribution. Since
\begin{equation*}
    e^{-\beta w\,x} - e^{-\frac{\beta x}{a}} =
      \beta\, x \int_w^{\frac{1}{a}}e^{-\beta vx }\,dv
\end{equation*}
and with an exchange in the order of the integrations, the
\pdf\refeqq{fJint} becomes
\begin{eqnarray*}
  f_J(x)&=&\frac{\alpha^2a^\alpha\beta^{1-\alpha}x^{-\alpha}}{\left(1-a^\alpha+a^\alpha\log a^\alpha\right)\Gamma(1-\alpha)}
          \int_1^{\frac{1}{a}}\frac{dw}{w^{1+\alpha}}\int_w^{\frac{1}{a}}e^{-\beta vx }\,dv
 \\
   &=&\frac{\alpha^2a^\alpha\beta^{1-\alpha}x^{-\alpha}}{\left(1-a^\alpha+a^\alpha\log a^\alpha\right)\Gamma(1-\alpha)}
          \int_1^{\frac{1}{a}}dv\,e^{-\beta vx}\int_1^w\frac{dw}{w^{1+\alpha}} \\
      &=&\frac{\alpha^2a^\alpha\beta^{1-\alpha}x^{-\alpha}}{\left(1-a^\alpha+a^\alpha\log
             a^\alpha\right)\Gamma(1-\alpha)}\int_1^{\frac{1}{a}}e^{-\beta vx}\,\frac{v^\alpha-1}{\alpha
v^\alpha}\,dv \\
     &=&\frac{\alpha\, a^\alpha}{1-a^\alpha+a^\alpha\log a^\alpha} \int_1^{\frac{1}{a}}\frac{x^{-\alpha} \left(\beta\,
  v\right)^{1-\alpha}e^{-\beta v\, x}}{\Gamma(1-\alpha)}\, \frac{v^\alpha-1}{v}\, dv
\end{eqnarray*}
that coincides with\refeqq{eq:outs:jumps} and is a mixture of gamma
laws $\gam(1-\alpha,\beta V)$ with a random rate parameter
distributed according to the \pdf\refeqq{eq:ou:cts:mixture:rate}, as
stated in the Remark\myref{rem:ou:cts:mixture}
\end{proof}

\noindent As discussed in the Section\myref{sec:Preliminaries}, the
knowledge of the law of $Z(t)=X_1+X_2$ in the
Proposition\myref{prop:ou:ts} also enables us to calculate the
distribution of the solution\refeqq{eq:sol:OU} of the given OU
equation\refeqq{eq:genOU_sde} with an arbitrary initial condition
$X_0$. In particular for a degenerate initial condition
$X_0=x_0,\;\Pqo$ we will have for every $t>0$
\begin{equation}
 X(t)\big|_{X_0=x_0}\, \eqd\,ax_0+ X_1 + X_2 \qquad\quad a=e^{-bt}\label{eq:prop:outs}
\end{equation}
where the distributions of $X_1,X_2$ are described in detail in the
Proposition\myref{prop:ou:ts}. Of course the
formula\refeqq{transchf} would give access then to the transition
distribution and therefore to all the details of the process.
\begin{remark}
The direct extension to the bilateral finite variation setting is
straightforward because every finite variation bilateral TS process
can be seen as the difference of two TS processes with different
parameters: as a consequence the Propositions\myref{prop:sdpp},
\myref{prop:ctsou} and \myref{prop:ou:ts}, can be easily extended to
the bilateral case. As far as the simulation of such processes is
concerned, this extension essentially boils down to running twice
the algorithms for TS subordinators. We omit an explicit proof to
avoid overloading the paper with lengthy details of routinary
nature.
\end{remark}
\section{Simulation Algorithms}\label{sec:sim}
The simulation of TS-OU processes and CTS laws have been widely
discussed in several studies (see for instance Kawai and
Masuda\mycite{KawaiMasuda_1, KawaiMasuda_2}, Zhang\mycite{Zhang11}
and Grabchack\mycite{Grabchak19, Grabchak20} and the references
therein) and several software packages are available for such
purpose. Therefore, in this section we will only illustrate how to
exactly simulate the CTS-OU and the OU-CTS processes that, although
similar in names, are two rather different objects as explained in
the previous sections.  As far as the CTS-OU processes are
concerned, our contribution is the enhancement of the simulation
performance by taking advantage of the $f_J(x)$
reprentation\refeqq{eq:fj:cts:ou} that, in contrast to
Zhang\mycite{Zhang11}, does not require any acceptance-rejection
procedure. On the other hand, with regard to OU-CTS processes, we
propose a new simulation procedure for the drawings from the mixture
with \pdf\refeqq{eq:ou:cts:mixture:rate}. At variance with the
approach of Qu et al.\mycite{QDZ20}, this algorithm is based on an
acceptance-rejection method whose expected number of iterations
before acceptance however can be made arbitrarily close to one and
is therefore more efficient. In our numerical experiments we
consider a time grid $t_0, t_1,\dots, t_M$, $\Delta t_m = t_m -
t_{m-1}\,,\; m=1,\dots,M$ with $M$ steps.

\subsection{CTS-OU processes}\label{sub:sim:oucts}
The simulation procedure for the generation of the skeleton of CTS
process is based on the Proposition\myref{prop:ctsou} and is
summarized in the Algorithm\myref{alg:ctsou}.
\begin{algorithm}
\caption{ }\label{alg:ctsou}
\begin{algorithmic}[1]
        \State $X_0\gets x$
        \For{ $m=1, \dots, M$}
        \State $a \gets e^{-b\Delta t_m}$
        \State $x_1\gets X_1\sim \cts\!\left(\alpha,\beta, c(1 - a^\alpha)\right)$
        \State $n\gets N_a\sim\poiss(\Lambda_a)$,
        \Comment{Generate an independent Poisson \rv\ with $\Lambda_a$ in\refeqq{eq:lambda:ctsou} }
        \State $u_i\gets U_i\sim\unif(0,1), i=1, \dots, n$
        \Comment{Generate $n$ \iid\ uniform \rv's}
        \State $v_i\gets\left(1 + \frac{a^{-\alpha}-1}{\alpha}u_i\right)^{\frac{1}{\alpha}}$
        \Comment{Generate according to\refeqq{eq:mix:law:ctsou}}
        \State $\tilde{\beta}_i \gets \beta\, v_i, i=1, \dots, n$
        \State $j_i\gets J_i\sim\gam(1-\alpha,\, \tilde{\beta}_i), i=1, \dots, n$
        \Comment{Generate $n$ independent gamma \rv's all with the same scale $1-\alpha$ and random rates}
        \State $x_2\gets \sum_{i=1}^nj_i$
        \State $X(t_m)\gets a\,X(t_{m-1}) + x_1 + x_2$.
        \EndFor
        \end{algorithmic}
\end{algorithm}
We remark that when $\alpha=0$ a CTS-OU process is a compound
Poisson process with a gamma stationary law whose efficient exact
simulation can be found in Sabino and Cufaro Petroni\mycite{cs20_1}.
\subsection{OU-CTS processes}\label{sub:sim:ctsou}
The simulation steps for the skeleton of a OU-CTS process are then
summarized in the Algorithm\myref{alg:ou}.
\begin{algorithm}
\caption{}\label{alg:ou}
\begin{algorithmic}[1]
        \State $X_0\gets x$
        \For{ $m=1, \dots, M$}
        \State $a \gets e^{-b\Delta t_m}$
        \State $x_1\gets X_1\sim \cts\!\left(\alpha,\frac{\beta}{a}, \frac{c(1 - a^\alpha)}{T\alpha\, b}\right)$
        \State $n\gets N_a\sim\poiss(\Lambda_a)$, \Comment{Generate an independent Poisson \rv\ with $\Lambda_a$ in\refeqq{eq:lambda:ctsou} }
        \State $v_i\gets V_i, i=1, \dots, n$ \Comment{Generate $n$ \iid\ \rv's with \pdf\ given by Equation\refeqq{eq:ou:cts:mixture:rate}}
        \State $\tilde{\beta}_i \gets \beta\, v_i, i=1, \dots, n$
        \State $j_i\gets J_i\sim\gam(1-\alpha,\, \tilde{\beta}_i), i=1, \dots, n$
        \Comment{Generate $n$ independent generalized gamma \rv's all with the same $p$, scale $p-\alpha$ and random rates}
        \State $x_2\gets \sum_{i=1}^nj_i$
        \State $X(t_m)\gets a\,X(t_{m-1}) + x_1 + x_2$.
        \EndFor
        \end{algorithmic}
\end{algorithm}
The sampling from a CTS law has been widely studied by several
authors (see for instance Devroye\mycite{Dev2009} and
Hofert\mycite{Hofert2012}), and here the only non-standard step is
the fifth one in the Algorithm\myref{alg:ou}, namely that allowing
the generation of the jumps of the compound Poisson process of
Proposition\myref{prop:ou:ts}. On the other hand, as mentioned in
Remark\myref{rem:ou:cts:mixture}, these jump sizes are \iid\
distributed \rv's following a gamma law with shape $1-\alpha$ and a
random rate. Therefore the unique remaining task is to sample from a
law with thr \pdf\ $f_V(v)$\refeqq{eq:ou:cts:mixture:rate}. Since
however this \pdf\ is not monotonic in $[1,\,^1/_a]$ for every value
of its parameters, we first define the new \rv
\begin{equation*}
    W=-\frac{\log V}{\log a}\qquad\qquad V=a^{-W}=e^{-W\log a}
\end{equation*}
that has now the \pdf\
\begin{eqnarray}
  f_W(w)&=&\frac{-a^\alpha\log a^\alpha}{1-a^\alpha+a^\alpha\log
    a^\alpha}\,\left(a^{-\alpha w}-1\right)\nonumber\\
 &=& \frac{\log a^{-\alpha}}{a^{-\alpha}-1-\log a^{-\alpha}}\,\left(e^{w\log a^{-\alpha}}-1 \right) \qquad\quad
           0\le w \le 1. \label{eq:jump:exp:mixture}
\end{eqnarray}
It is straightforward to check then that $f_W(w)$ is monotonic and
convex in $[0, 1]$ and hence one can rely on the inversion-rejection
algorithm illustrated in Devroye\mycite{Dev86} page 355. The
solution that we propose here is very similar to that, and in effect
consists in replacing the steps required for the sequential search
of the inversion part with the method of partitioning the densities
into intervals (see once again Devroye\mycite{Dev86} page 67). We
remark indeed that $f_W(w)$, besides being monotonic and convex,
also has the following upper bound
\begin{align*}
    & \qquad\qquad f_W(w)\le g(w) = G(a, \alpha)\, \bar{g}(w),\qquad\qquad\qquad 0\le w\le 1 \\
    & G(a, \alpha)=\frac{\log a^{-\alpha}\left(a^{-\alpha}-1\right)}{2\left(a^{-\alpha}-1-\log a^{-\alpha}\right)}\,,
            \qquad\quad \bar{g}(w) = 2\,w
\end{align*}
namely it is dominated by a linear function where $G(a, \alpha)$ is
the area under $g(x)$. We could therefore devise a simple
acceptance-rejection procedure where $G$ should be as close to $1$
as possible because it roughly represents the number of iterations
needed in the rejection algorithm. While however $G(a,
\alpha)\to1^+$ when $a\rightarrow 1^-$, unfortunately it is $G(a,
\alpha)\to+\infty$ for $a\rightarrow 0^+$. Taking therefore
$a=e^{-b\, t}$, this latter limit means that the generation of a
OU-CTS process with a large $b\, t$ might either have a heavy
computational cost, or potentially require a large number of
simulations.

In principle we could consider only small time steps, but on the
other hand the acceptance-rejection sampling can be easily improved,
via the modified decomposition method elucidated in
Devroye\mycite{Dev86} page $69$, just by taking a piecewise linear
dominating function $g(w)$. More precisely we partition $[0, 1]$
into $L$ disjoint intervals $\mathcal{I}_{\ell}=[w_{\ell-1},
w_{\ell}],\;\; \ell=1, \dots, L$,\;\;
$\bigcup_{\ell}\mathcal{I_{\ell}}=[0, 1]$ with $w_0=0$, and then we
have
\begin{align*}
    &\qquad\qquad\qquad\qquad\qquad f_W(w) \le g_L(w) = \sum_{\ell=1}^L \,g_{\ell}(w)\mathds{1}_\ell(w), \\
    & g_{\ell}(w) = \frac{f_W(w_{\ell}) - f_W(w_{\ell-1})}{w_{\ell} -
w_{\ell-1}}\,w + f_W(w_{\ell-1}),
                    \quad\mathds{1}_\ell(w)\!=\!\left\{
                                               \begin{array}{ll}
                                                 \!\!1, & \hbox{if $w\in\mathcal{I}_{\ell}$} \\
                                                 \!\!0, & \hbox{else}
                                               \end{array}
                                             \right.
    \quad \ell = 1, \dots, L
\end{align*}
where we can also write
\begin{align*}
    &\qquad\quad g_L(w) = G_L(a, \alpha)\, \sum_{\ell=1}^L p_{\ell}\,\bar{g}_{\ell}(w), \qquad\quad \bar{g}_{\ell} = \frac{g_{\ell}(w)}{q_{\ell}},\\
    & q_{\ell} =  \int_{\mathcal{I}_{\ell}}g_L(w)dw, \qquad\quad
p_{\ell} = \frac{q_{\ell}}{G_L(a, \alpha)}, \qquad\quad G_L(a,
\alpha) = \sum_{\ell=1}^L q_{\ell}.
\end{align*}
Apparently the $\bar{g}_{\ell}(w),\; \ell=1, \dots, L$ turn out to
be piecewise linear \pdf's, while the $p_\ell$ constitute a
discrete, normalized distribution. Increasing the number $L$ of the
intervals, with given $0 <a <1$ and $0<\alpha<1$, $G_L(a, \alpha)$
can be made arbitrary close to $1$ because it measures the
trapezoidal approximation of $\int_0^1f_W(w)dw=1$. On the other hand
the random drawing from the laws with \pdf's $\bar{g}_{\ell}(w),\;
\ell=1, \dots, L$ is very simple and can be implemented via the
standard routines. Denoting now with $S$ a \rv\ with distribution
$\PR{S=\ell}=p_{\ell},\; \ell=1, \dots, L$, and with $Y_{\ell}$ a
\rv\ with \pdf\ $\bar{g}_{\ell}(w)$, the
Algorithm\myref{alg:mixture:rate} summarizes the instructions needed
to implement the fifth step in the Algorithm\myref{alg:ou}. We
remark finally that an alternative procedure, leading to similar
results, might have been some shrewd decomposition of $f_W(w)$
rather than of its dominating curve. However Devroye\mycite{Dev86}
at the page 70 nicely spell out the reasons why the procedure here
adopted is in principle preferable.
\begin{algorithm}
\caption{}\label{alg:mixture:rate}
\begin{algorithmic}[1]
        \State $s \gets S$ \Comment{Generate a discrete \rv\ with $\PR{S=\ell}=p_{\ell},\; \ell=1, \dots, L$}
        \While{$u\le \frac{f_W(y)}{g_L(y)}$}
        \State $u\gets U\sim\unif{[0, 1]}$ \Comment{Generate a uniform \rv.}
        \State $y\gets Y_\ell$ \Comment{Generate a \rv\ with \pdf\ $\bar{g}_\ell(w)$}
        \EndWhile
        \State $v\gets a^{-\alpha\,y}$
        \State \Return v
        \end{algorithmic}
\end{algorithm}

\begin{remark}
It is worthwhile mentioning that an alternative procedure relying on
a different acceptance-rejection strategy has been proposed in Qu et
al.\mycite{QDZ20}. In contrast to this last approach, however, in
our algorithm $G_L(a, \alpha)$ can be made arbitrary close to $1$
irrespective of the value of $a$ (and of the size of the time-step),
and therefore our approach turns out to be computationally more
efficient.

On the other hand we can also take advantage of the interplay
between the OU-CTS and the CTS-OU processes to gain an insight into
the possible benefits of the different simulation strategies:
from\refeqq{eq:levy:measures:ou} we find indeed that the \Levy\
density of the BDLP $L(t)$ for a CTS-OU process is
\begin{equation}
\nu_L(x) = c\,T\,b\,\alpha\frac{e^{-\beta \, x}}{x^{\alpha + 1}} + c\,T\,b\,\beta\frac{e^{-\beta\,x}}{x^{\alpha}}.
\label{eq:levy:bdlp:ctsou}
\end{equation}
\noindent where the first term apparently provides the BDLP $L_1(t)$
of an OU-CTS, whereas the second one corresponds to a compound Poisson
$L_2(t)$ (see Cont and Tankov\mycite{ContTankov2004} page 132).
Therefore the path-wise solution\refeqq{eq:sol:OU} of our CTS-OU
process is now
\begin{equation*}
    X(t)=x_0e^{-bt}+Z_1(t)+Z_2(t)\qquad\left\{
                                         \begin{array}{l}
                                           Z_1(t)=\int_0^te^{-b(t-s)}dL_1(s) \\
                                           Z_2(t)=\int_0^te^{-b(t-s)}dL_2(s)
                                         \end{array}
                                       \right.
\end{equation*}
where $Z_1$ is a OU-CTS (with $Z_1(0)=0$) and, as shown
in\refeqq{alpha0}, $Z_2$ is a compound Poisson that can easily be
simulated based on\refeqq{eq:law}. On the other hand, according to
the Proposition\myref{prop:ou:ts}, the OU-CTS process $Z_1(t)$ is in
its turn the sum of a (time-dependent) CTS \rv\ $X_1$ and of a
(time-dependent) compound Poisson \rv\ $X_2$; so that ultimately a
CTS-OU process with a degenerate initial condition $X_0=x_0$ --
beyond being of the form\refeqq{eq:prop:ctsou} presented in the
Proposition\myref{prop:ctsou} -- can now be seen also as the sum of
four random terms: one distributed according to a CTS law, two
compound Poisson \rv's and a degenerate summand. Of course the two
representations coincide in distribution and, as a matter of fact,
the four-terms representation reproduces again that of Qu et
al.\mycite{QDZ20}; but the possible alternative simulation
algorithms stemming from the four term representation, although
perfectly correct, would require now the generation of three \rv's
and the use of acceptance-rejection methods in addition to that
needed for the sampling of a CTS distributed \rv, and therefore they
would turn out to be rather less efficient than the
Algorithm\myref{alg:ctsou}.

Finally, based on Remark\myref{rem:ou:cts:mixture} and on the
results of Qu et al.\mycite{QDZ19}, we notice that for $\alpha=0$
the simulation of $V$ is here much easier because no
acceptance-rejection method is required.
\end{remark}

\section{Numerical Experiments}\label{sec:num:exp}
In this section, we will assess the performance and the
effectiveness of our algorithms through extensive numerical
experiments. All the simulation experiments in the present paper
have been conducted using \emph{Python} with a $64$-bit Intel Core
i5-6300U CPU, 8GB. The performance of the algorithms is ranked in
terms of the percentage error relative to the first four cumulants
denoted \emph{err} \% and defined as
\begin{equation*}
   \text{err \%} = \frac{\text{true value} - \text{estimated
value}}{\text{true value}}
\end{equation*}
Finally, for simplicity we assume that the value of the constant
time scale is $T=1$.

\subsection{CTS-OU processes}\label{sub:num:ctsou}

Since the \Levy\ density  on $[0,+\infty)$ of the stationary law
$\cts\left(\alpha, \beta, c\right)$ of a CTS-OU process with finite
variation and parameters $\alpha, \beta, c$ is
\begin{equation*}
    \overline{\nu}_X(x)=\frac{c\,e^{-\beta x}}{x^{1+\alpha}}\qquad\quad
x>0,\qquad\beta>0,\qquad0\le\alpha<1
\end{equation*}
its cumulants are (see for instance Cont and
Tankov\mycite{ContTankov2004}, Proposition 3.13)
\begin{equation}\label{eq:cts:cumulants}
c_{\overline{X}, k} =\int_0^{+\infty}x^k\overline{\nu}_X(x)\,dx=
c\,\beta^{\alpha-k}\Gamma(k-\alpha)
\end{equation}
and therefore from\refeqq{eq:cumulants:ou3}
and\refeqq{eq:cumulants:ou4} we obtain the cumulants of
$X(\Delta t)$ with the degenerate initial condition $X_0=x_0$
\begin{equation}\label{eq:cumulants:ctsou}
    c_{X,k}(x_0,\Delta t)=x_0e^{-b\Delta t}\delta_{k,1} + c\,\beta^{\alpha-k}\Gamma(k-\alpha)(1-e^{-kb\Delta t})
    \qquad\quad k=1,2,\ldots
\end{equation}

In our numerical experiments we consider a CTS-OU process with
parameters $\left(b, c, \beta\right) = \left(10, 0.8, 1.4\right)$
whose trajectories with $\alpha\in\{0.3, 0.5, 0.7, 0.9\}$ are
displayed in Figure\myref{fig:ctsou:trajectories} where of course
the case $\alpha=0.5$ is that of an IG-OU (inverse Gaussian)
process. We remark that the sampling from an IG law can be performed
via the many-two-one transformation method of Michael et
al.\mycite{MSH76}, and therefore no acceptance-rejection procedure
is required in Algorithm\myref{alg:ctsou} to generate the skeleton
of an IG-OU process.
\begin{figure}
\caption{Sample trajectories of CTS-OU processes with $\left(b, c,
\beta\right) = \left(10, 0.8, 1.4\right)$ and $\alpha\in\{0.3, 0.5,
0.7, 0.9\}$}\label{fig:ctsou:trajectories}
        \begin{subfigure}[c]{.5\textwidth}{
                \includegraphics[width=70mm]{./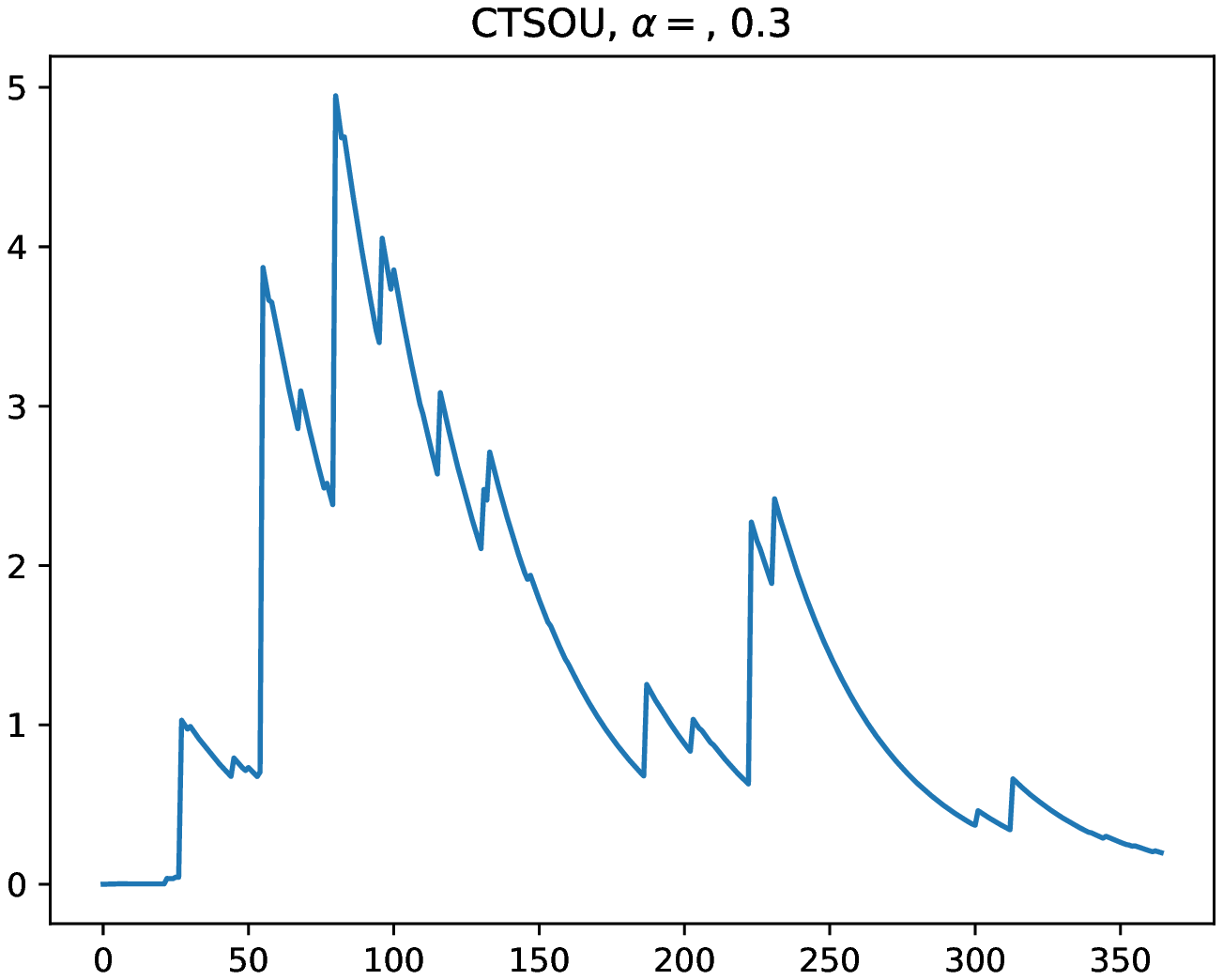}
                }
        \end{subfigure}
        \begin{subfigure}[c]{.5\textwidth}{
                \includegraphics[width=70mm]{./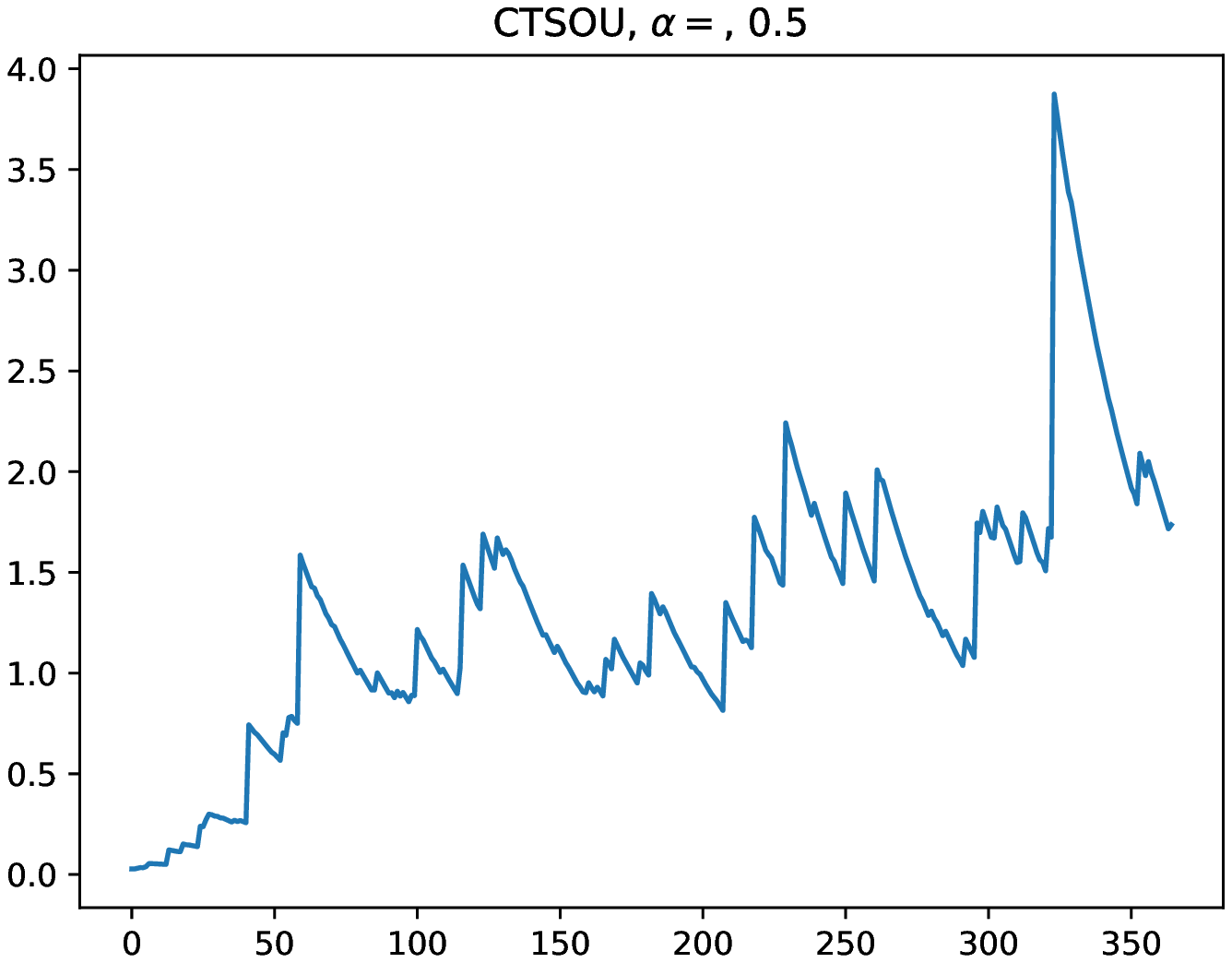}
                }
        \end{subfigure}
        \\
        \begin{subfigure}[c]{.5\textwidth}{
                \includegraphics[width=70mm]{./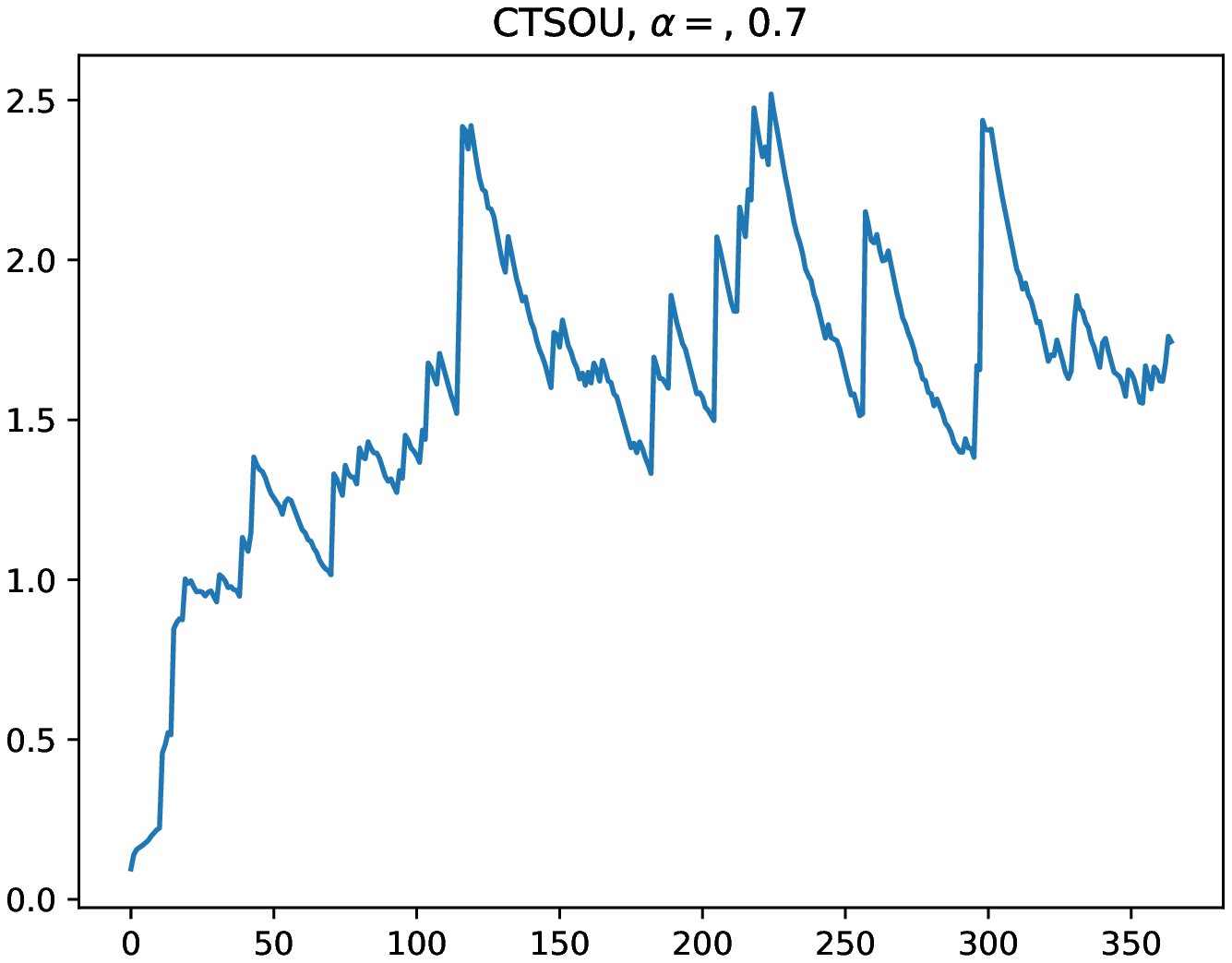}
                }
        \end{subfigure}
        \begin{subfigure}[c]{.5\textwidth}{
                \includegraphics[width=70mm]{./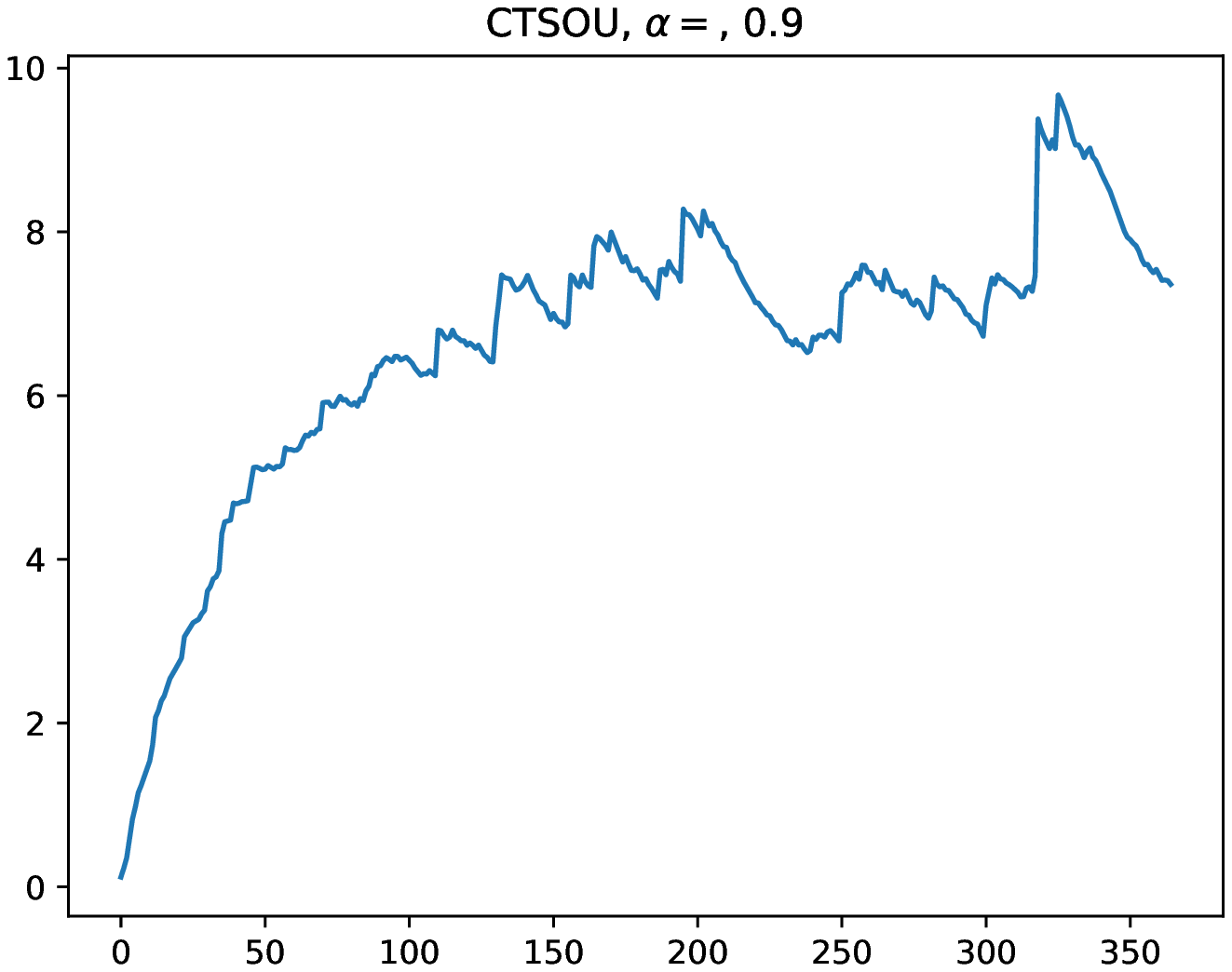}
                }
        \end{subfigure}
\end{figure}%
The Tables\myref{tab:cts:ou:1:365} and\myref{tab:cts:ou:30:365}
compare then the true values of the first four cumulants
$c_{X,k}(0,\Delta t)$ with their corresponding estimates from $10^6$
simulations respectively with $\Delta t=1/365$ and $\Delta
t=30/365$. We can conclude therefrom that the proposed
Algorithm\myref{alg:ctsou} produces unbiased cumulants that are very
close to their theoretical values. For the sake of brevity, we do
not report the additional results obtained with different parameter
settings that anyhow bring us to the same findings. Overall, from
the numerical results reported in this section, it is evident that
the Algorithm\myref{alg:ctsou} proposed above can achieve a very
high level of accuracy as well as a conspicuous efficiency.

The fact that we can easily compute the cumulants of an OU process
substantiates the advantages of focusing our treatment on the law of
the \arem\ of its stationary distribution. In addition to both the
simple derivation of the transition \pdf\ and the detailed testing
of its statistical properties, we could indeed also conceive a
parameter estimation procedure based on the generalized method of
moments (GMM). We remark finally that the law of an \arem\ always is
\id, and therefore a simple modification of the simulation procedure
presented in the Algorithm\myref{alg:ctsou} could be adopted for the
generation of a \Levy\ process whose law at time $T$ is that of the
\arem\ of a CTS distribution.

\begin{table}[ht!]
    \centering\scriptsize
        \resizebox{\textwidth}{!}{
        \begin{tabular}{*{13}{|c|rrr|rrr|rrr|rrr}}
                    \hline
&       \multicolumn{3}{c|}{$c_{X, 1}(0,\Delta t)$} & \multicolumn{3}{c|}{$c_{X, 2}(0,\Delta t)$} & \multicolumn{3}{c|}{$c_{X, 3}(0,\Delta t)$} & \multicolumn{3}{c|}{$c_{X, 4}(0,\Delta t)$} \\
                        \hline
                                \multicolumn{13}{|c|}{Algorithm\myref{alg:ctsou}}\\
                    \hline
                    $\alpha$ & true & MC & err \% & true & MC & err \% & true & MC & err \% & true & MC & err \% \\
                    \hline
$0.1$ & $1.71$ & $1.72$ & $-0.6$ & $2.16$ & $2.18$ & $-0.9$ & $4.35$ & $4.39$ & $-0.9$ & $11.85$ & $11.91$ & $-0.6$\\
$0.3$ & $2.22$ & $2.22$ & $0.0$ & $2.19$ & $2.20$ & $-0.7$ & $3.93$ & $4.00$ & $-1.8$ & $9.97$ & $10.25$ & $-2.8$\\
$0.5$ & $3.24$ & $3.24$ & $0.1$ & $2.28$ & $2.28$ & $0.3$ & $3.62$ & $3.63$ & $-0.4$ & $8.50$ & $8.63$ & $-1.6$\\
$0.7$ & $5.85$ & $5.85$ & $-0.1$ & $2.47$ & $2.48$ & $-0.4$ & $3.40$ & $3.42$ & $-0.7$ & $7.34$ & $7.44$ & $-1.3$\\
$0.9$ & $19.89$ & $19.90$ & $-0.1$ & $2.80$ & $2.82$ & $-0.7$ & $3.26$ & $3.29$ & $-0.9$ & $6.43$ & $6.40$ & $0.4$\\
                    \hline
        \end{tabular}
        }
    \scriptsize
    \caption{\footnotesize{Comparing the first four true cumulants
    with their corresponding MC-estimated values (multiplied by $100$)
obtained with $10^6$ simulations and $\Delta t=1/365$.}}\label{tab:cts:ou:1:365}
\end{table}

\begin{table}[ht!]
    \centering\scriptsize
        \resizebox{\textwidth}{!}{
        \begin{tabular}{*{13}{|c|rrr|rrr|rrr|rrr}}
                    \hline
&       \multicolumn{3}{c|}{$c_{X, 1}(0,\Delta t)$} & \multicolumn{3}{c|}{$c_{X, 2}(0,\Delta t)$} & \multicolumn{3}{c|}{$c_{X, 3}(0,\Delta t)$} & \multicolumn{3}{c|}{$c_{X, 4}(0,\Delta t)$} \\
                        \hline
                                \multicolumn{13}{|c|}{Algorithm\myref{alg:ctsou}}\\
                    \hline
                    $\alpha$ & true & MC & err \% & true & MC & err \% & true & MC & err \% & true & MC & err \% \\
                    \hline
$0.1$ & $3.54$ & $3.60$ & $-1.8$ & $3.28$ & $3.33$ & $-1.6$ & $5.04$ & $5.16$ & $-2.4$ & $10.99$ & $11.57$ & $-5.3$\\
$0.3$ & $4.60$ & $4.65$ & $-1.1$ & $3.31$ & $3.40$ & $-2.8$ & $4.56$ & $4.73$ & $-3.8$ & $9.25$ & $9.77$ & $-5.7$\\
$0.5$ & $6.72$ & $6.81$ & $-1.4$ & $3.45$ & $3.50$ & $-1.5$ & $4.20$ & $4.40$ & $-5.0$ & $7.88$ & $8.15$ & $-3.4$\\
$0.7$ & $12.12$ & $12.20$ & $-0.6$ & $3.74$ & $3.82$ & $-2.1$ & $3.94$ & $4.05$ & $-2.8$ & $6.81$ & $6.98$ & $-2.5$\\
$0.9$ & $41.24$ & $41.28$ & $-0.1$ & $4.24$ & $4.27$ & $-0.7$ & $3.78$ & $3.82$ & $-1.2$ & $5.96$ & $6.06$ & $-1.6$\\
                        \hline
        \end{tabular}
        }
    \scriptsize
    \caption{\footnotesize{Comparing the first four true cumulants
    with their corresponding MC-estimated values (multiplied by $10$)
obtained with $10^6$ simulations and $\Delta t=30/365$.}}\label{tab:cts:ou:30:365}
\end{table}


\subsection{OU-CTS processes}\label{sub:num:cts}
Here too we will benchmark the results of the numerical experiments
against the true values of the first four cumulants of OU-CTS
process at time $\Delta t$ with $X(0)=0$. From the
formula\refeqq{eq:cts:cumulants} for the cumulants of a
$\cts\left(\alpha, \beta, c\right)$ distribution,  and
from\refeqq{eq:cumulants:ou1} and\refeqq{eq:cumulants:ou2} we first
recover indeed the cumulants of $X(\Delta t)$ with the degenerate
initial condition $X_0=x_0$
\begin{equation}\label{eq:cumulants:oucts}
    c_{X,k}(x_0,\Delta t)=x_0e^{-b\Delta t}\delta_{k,1}+\frac{c\,(1-e^{-k\Delta t})}{Tb\,k\beta^{k-\alpha}}\,\Gamma\left(k-\alpha\right)
    \qquad\quad k=1,2,\ldots
\end{equation}
For our simulations we consider then the same parameter settings of
the previous section -- $\left(b, c, \beta\right) = \left(10, 0.8,
1.4\right)$ -- adapted to an OU-CTS process, and with
$\alpha\in\{0.3, 0.5, 0.7, 0.9\}$ we get the sample trajectories
displayed in the Figure\myref{fig:oucts:trajectories}, where of
course the case $\alpha=0.5$ is that of an OU-IG process.
\begin{figure}
\caption{Sample trajectories of OU-CTS processes with $\left(b, c,
\beta\right) = \left(10, 0.8, 1.4\right)$ and $\alpha\in\{0.3, 0.5,
0.7, 0.9\}$}\label{fig:oucts:trajectories}
        \begin{subfigure}[c]{.5\textwidth}{
                \includegraphics[width=70mm]{./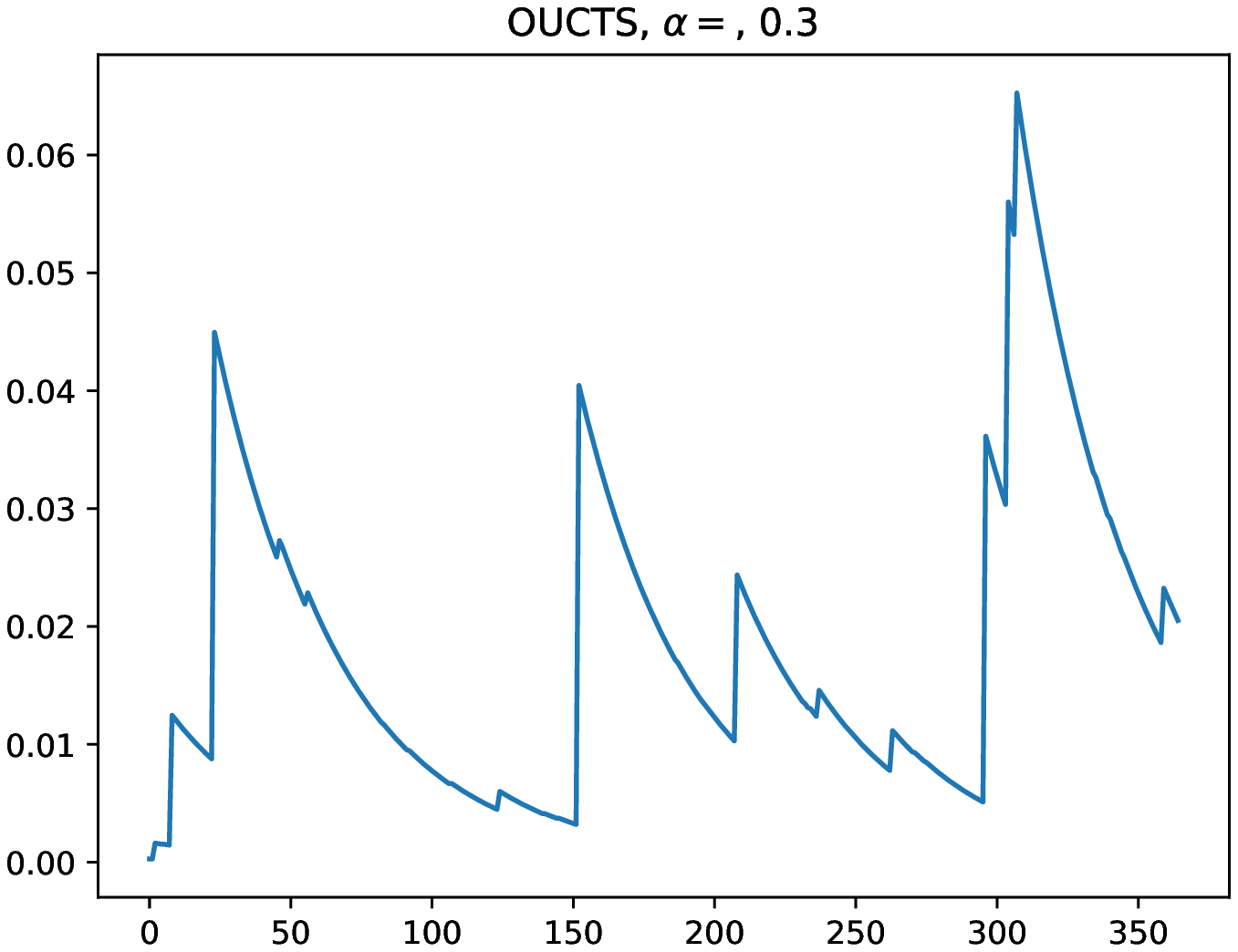}
                }
        \end{subfigure}
        \begin{subfigure}[c]{.5\textwidth}{
                \includegraphics[width=70mm]{./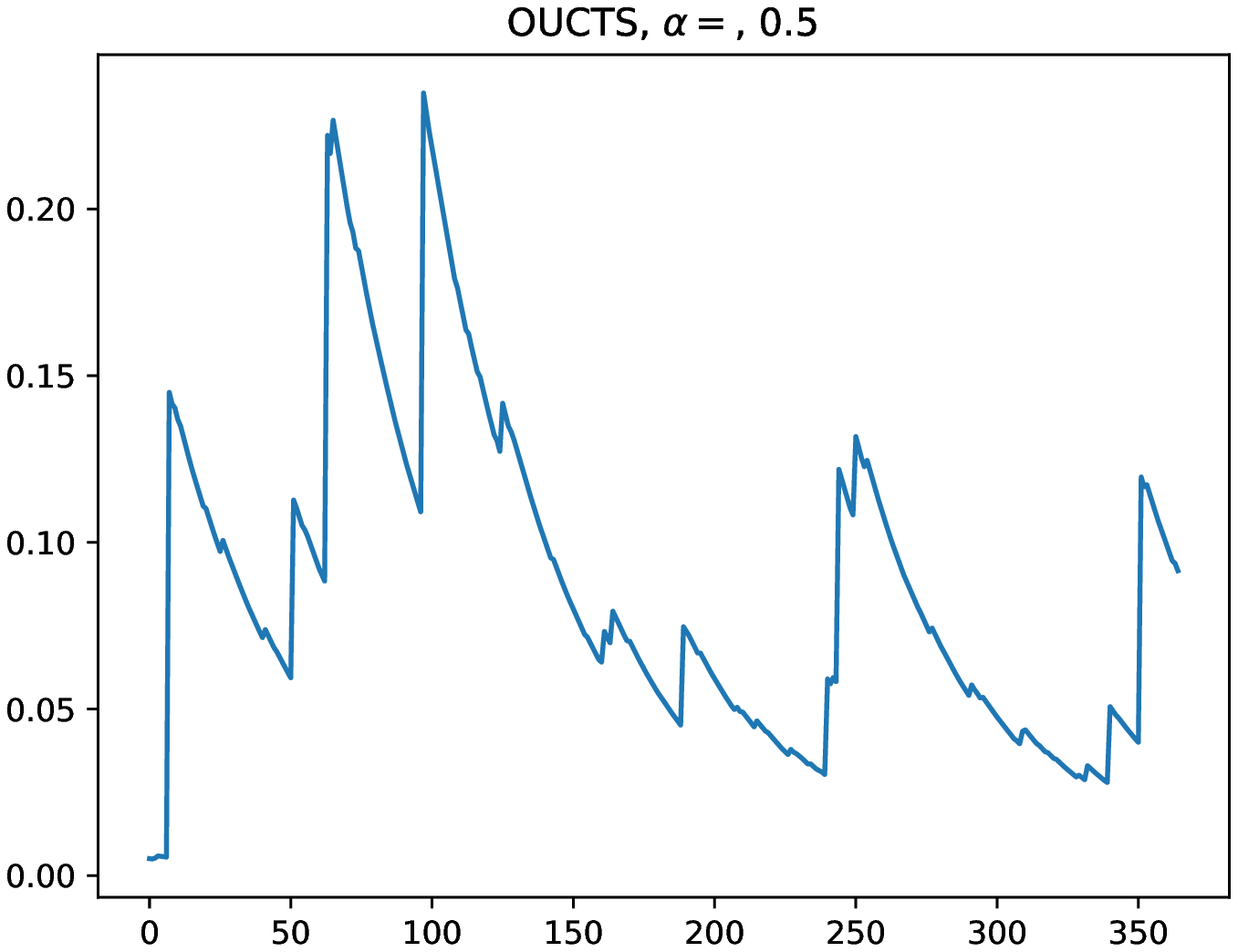}
                }
        \end{subfigure}
        \\
        \begin{subfigure}[c]{.5\textwidth}{
                \includegraphics[width=70mm]{./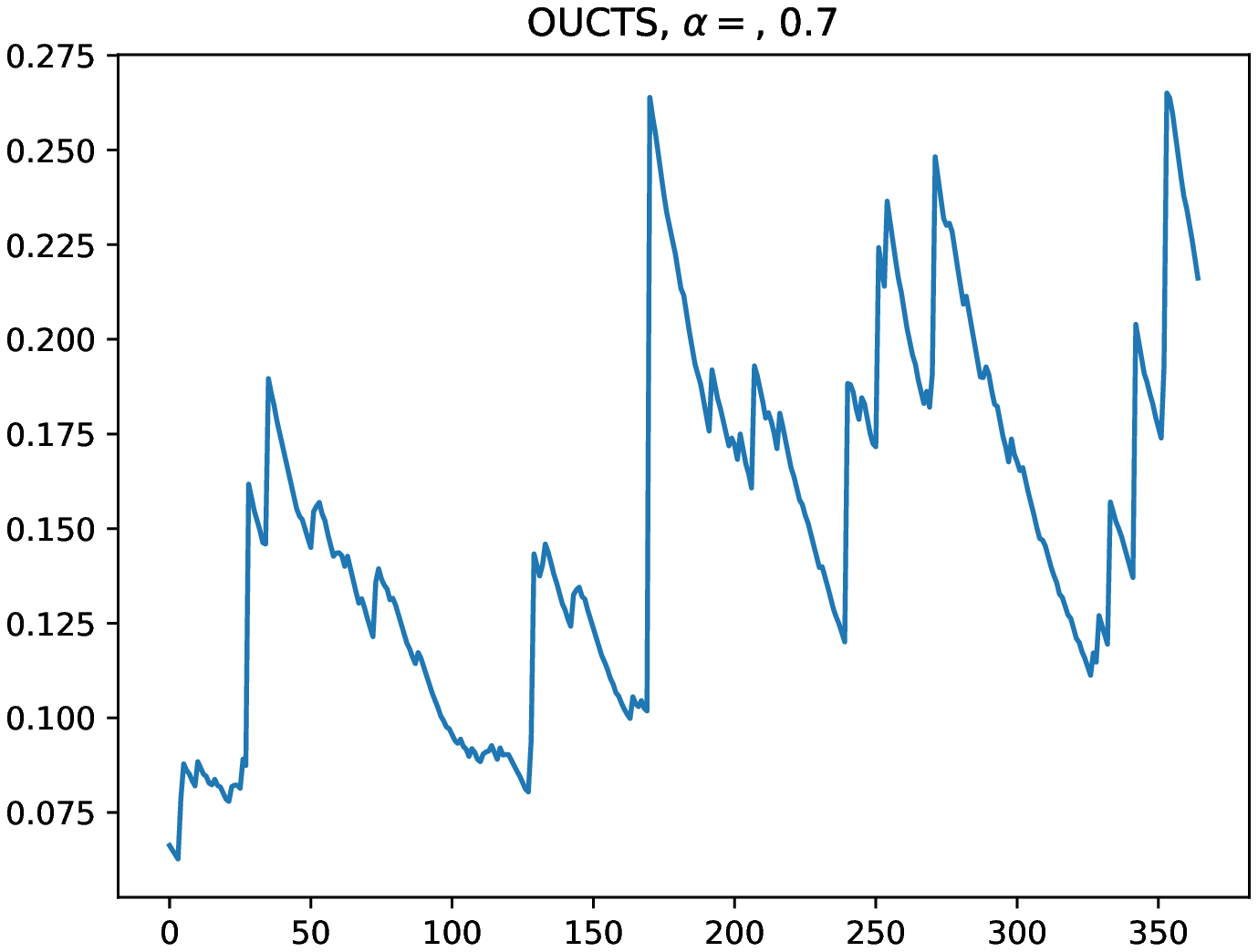}
                }
        \end{subfigure}
        \begin{subfigure}[c]{.5\textwidth}{
                \includegraphics[width=70mm]{./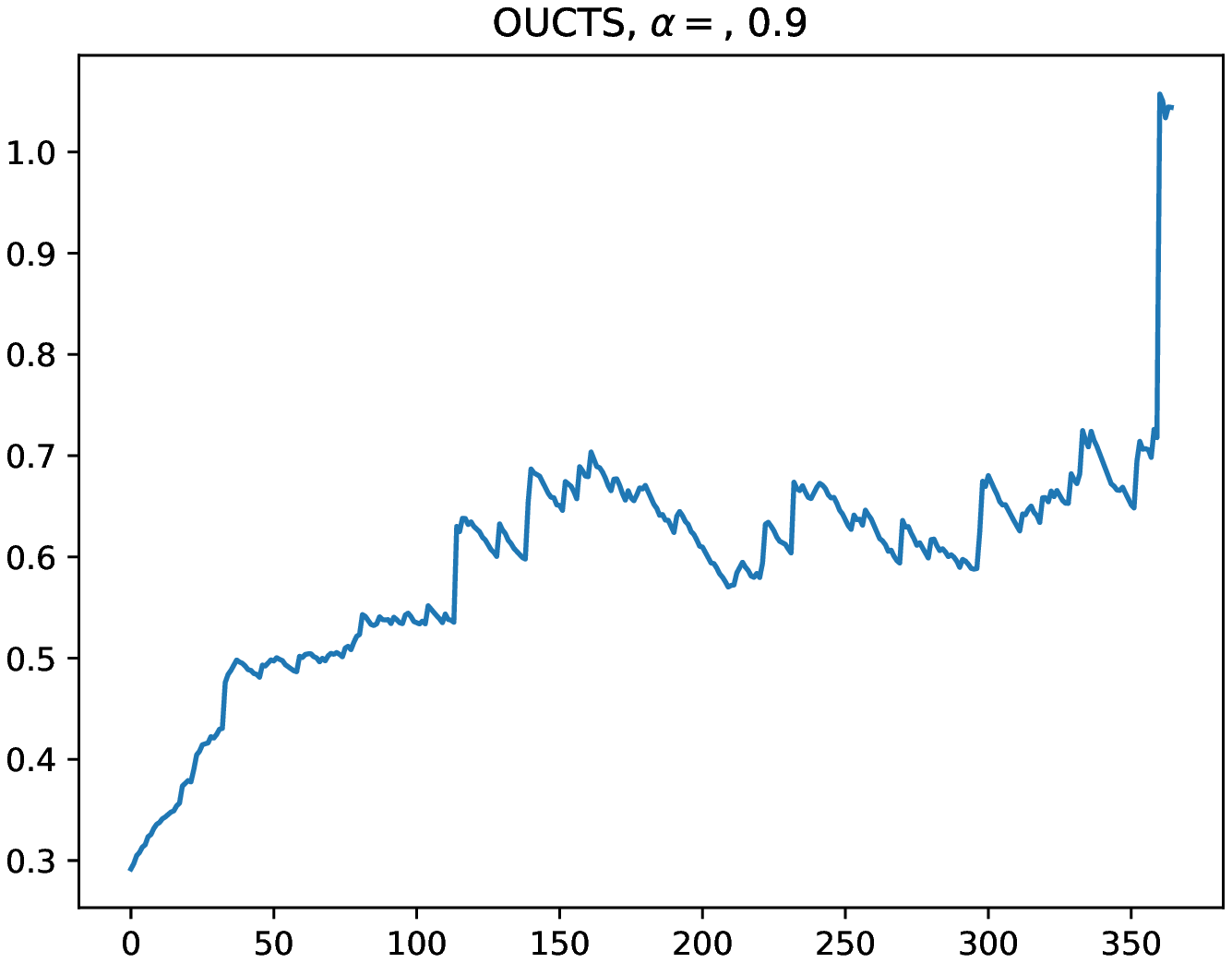}
                }
        \end{subfigure}
\end{figure}%

In addition to the Algorithm\myref{alg:ou}, to generate the OU-CTS
processes we will consider here two approximate procedures: the
first boils down to simply neglect $X_2$ in the
Proposition\myref{prop:ou:ts}; the second -- in the same vein of Benth
et al.\mycite{BDPL18} dealing with the \emph{normal inverse
Gaussian}-driven OU processes -- takes advantage of the
approximation of the law of $Z(t)$ in\refeqq{eq:sol:OU} with that of
$e^{-k\, t}L(t)$ where $L(t)\sim\cts\left(\alpha,\,^\beta/_a,\,
c\,t\right)$. The Tables\myref{tab:ou:cts:1:365}
and\myref{tab:ou:cts:30:365} compare then the true values of the
first four cumulants $c_{X,k}(0,\Delta t)$ with their corresponding
estimates for $10^6$ simulations with $\Delta t=1/365$ and $\Delta t=30/365$; the
labels \emph{$X_1$ only} and \emph{Approximation 2} refer to the
aforesaid first and second alternative procedures respectively.

From the Table\myref{tab:ou:cts:1:365} we can now conclude that our
\emph{Algorithm\myref{alg:ou}} has the lowest percent errors, but
nevertheless, in some practical situations, the errors of two other
approximations could be deemed acceptable taking also into account
that their computational cost is lower. In particular, the second
alternative procedure outperforms the third one and its percent
errors are not much higher than those of the exact method. When
however the time step is larger, or equivalently when $a=e^{-b\Delta t}$ is
close to $0$, the three procedures give radically different outcomes
and, as it is shown in the Table\myref{tab:ou:cts:30:365}, the two
approximate methods return completely biased results. Conversely,
the exact method continues to be reliable and its percent errors
remain small even for the higher cumulants.

The previous state of affairs for an OU-CTS is due to the fact that
$X_2$ in Proposition\myref{prop:ou:ts} produces only a second order
effect when $\Delta t\to0^+$: using indeed a Taylor expansion we
find
\begin{equation*}
    \Lambda_a=\frac{c\Gamma(1-\alpha)b\,\beta^{\alpha}}{2T}\,(\Delta
t)^2+o\big((\Delta t)^2\big)
\end{equation*}
and therefore the compound Poisson $X_2$ has a relevant impact only
when $\Delta t$ is not too small. Notice instead that this is not
how a CTS-OU process behaves because from the
Proposition\myref{prop:ctsou} we see that for $\Delta t\to0^+$
\begin{equation*}
   \Lambda_a=c\Gamma(1-\alpha)b\,\beta^{\alpha}\Delta t+o(\Delta
   t)
\end{equation*}
so that $X_2$ results in a first order effect and cannot be
neglected even for small $\Delta t$. As mentioned above, to tackle
the simulation of the random rate, Qu et al.\mycite{QDZ20} have
proposed an alternative solution that is based on an acceptance
rejection method again, and whose expected number of iterations
before acceptance tends to $1$ for small time steps ($\Delta
t\to0^+;\;a\rightarrow 1^-$); but unfortunately this value somehow
deteriorates and tends to $2$ ($50\%$ of acceptance) for large time
steps ($\Delta t\to+\infty;\;a\rightarrow 0^+$). From our previous
findings we know instead that neglecting $X_2$ is a fair
approximation for finer time grids so that the impact on the
computational cost of the acceptance rejection is rather restricted.
On the other hand, no matter how large the time step $\Delta t$ is,
with our algorithm the expected number of iterations before
acceptance can be kept as close to $1$ as possible because it
depends on the accuracy of a trapezioidal approximation. Therefore,
recalling also that the computational cost to generate a simple
discrete \rv\ is very low, our approach turns out to be
computationally more efficient.

These observations could lead to a convenient strategy combining
parameters estimation and exact simulation of the OU-CTS processes.
Assuming that the data could be made available with a fine enough
time-granularity (e.g. daily $t=1/365$), we could base the
parameters estimation on the likelihood methods by approximating the
exact transition \pdf\ with that of a CTS law $\cts\left(\alpha,
\frac{\beta}{a}, \frac{c\left(1 -
a^{\alpha}\right)}{T\alpha\,b}\right)$. However, to avoid being
forced to always simulate the OU-CTS processes on a fine time-grid
allowing the approximations (for instance if one needs to simulate
it at a monthly granularity $t=30/365$), the generation of the
skeleton of such processes will be preferably based on the exact
method of the Algorthm\myref{alg:ou}.

\begin{table}[ht!]
    \centering\scriptsize
        \resizebox{\textwidth}{!}{
        \begin{tabular}{*{13}{|c|rrr|rrr|rrr|rrr}}
                    \hline
&       \multicolumn{3}{c|}{$c_{X, 1}(0,\Delta t)$} & \multicolumn{3}{c|}{$c_{X, 2}(0,\Delta t)$} & \multicolumn{3}{c|}{$c_{X, 3}(0,\Delta t)$} & \multicolumn{3}{c|}{$c_{X, 4}(0,\Delta t)$} \\
                        \hline
                                \multicolumn{13}{|c|}{Algorithm\myref{alg:ou}}\\
                    \hline
                    $\alpha$ & true & MC & err \% & true & MC & err \% & true & MC & err \% & true & MC & err \% \\
                    \hline
$0.1$ & $1.71$ & $1.70$ & $0.2$ & $1.08$ & $1.07$ & $0.7$ & $1.45$ & $1.43$ & $1.0$ & $2.96$ & $2.91$ & $1.7$\\
$0.3$ & $2.22$ & $2.22$ & $-0.1$ & $1.09$ & $1.08$ & $0.9$ & $1.31$ & $1.27$ & $3.2$ & $2.49$ & $2.43$ & $2.5$\\
$0.5$ & $3.24$ & $3.23$ & $0.2$ & $1.14$ & $1.15$ & $-0.5$ & $1.21$ & $1.25$ & $-3.6$ & $2.12$ & $2.18$ & $-2.7$\\
$0.7$ & $5.85$ & $5.85$ & $0.0$ & $1.24$ & $1.24$ & $-0.4$ & $1.13$ & $1.16$ & $-2.4$ & $1.84$ & $1.91$ & $-4.2$\\
$0.9$ & $19.89$ & $19.88$ & $0.0$ & $1.40$ & $1.39$ & $0.6$ & $1.09$ & $1.06$ & $2.4$ & $1.61$ & $1.54$ & $4.2$\\
                        \hline
                    \multicolumn{13}{|c|}{$X_1$ only }\\
                    \hline
                    $\alpha$ & true & MC & err \% & true & MC & err \% & true & MC & err \% & true & MC & err \% \\
                    \hline
$0.1$ & $1.71$ & $1.68$ & $1.8$ & $1.08$ & $1.04$ & $4.2$ & $1.45$ & $1.35$ & $6.7$ & $2.96$ & $2.72$ & $8.2$\\
$0.3$ & $2.22$ & $2.19$ & $1.4$ & $1.09$ & $1.06$ & $3.3$ & $1.31$ & $1.24$ & $5.6$ & $2.49$ & $2.28$ & $8.6$\\
$0.5$ & $3.24$ & $3.22$ & $0.6$ & $1.14$ & $1.11$ & $2.5$ & $1.21$ & $1.14$ & $5.2$ & $2.12$ & $1.94$ & $8.9$\\
$0.7$ & $5.85$ & $5.82$ & $0.4$ & $1.24$ & $1.21$ & $2.1$ & $1.13$ & $1.09$ & $4.1$ & $1.84$ & $1.73$ & $5.8$\\
$0.9$ & $19.89$ & $19.86$ & $0.1$ & $1.40$ & $1.37$ & $2.2$ & $1.09$ & $1.01$ & $6.9$ & $1.61$ & $1.36$ & $15.6$\\
                    \hline
                        \multicolumn{13}{|c|}{Approximation 2 }\\
                    \hline
                    $\alpha$ & true & MC & err \% & true & MC & err \% & true & MC & err \% & true & MC & err \% \\
                    \hline
$0.1$ & $1.71$ & $1.63$ & $4.3$ & $1.08$ & $1.06$ & $2.1$ & $1.45$ & $1.56$ & $-7.5$ & $2.96$ & $3.62$ & $-22.4$\\
$0.3$ & $2.22$ & $2.16$ & $2.4$ & $1.09$ & $1.00$ & $8.6$ & $1.31$ & $1.05$ & $19.6$ & $2.49$ & $1.63$ & $34.7$\\
$0.5$ & $3.24$ & $3.22$ & $0.5$ & $1.14$ & $1.17$ & $-2.5$ & $1.21$ & $1.34$ & $-11.0$ & $2.12$ & $2.54$ & $-19.7$\\
$0.7$ & $5.85$ & $5.68$ & $2.8$ & $1.24$ & $1.16$ & $6.1$ & $1.13$ & $0.95$ & $16.3$ & $1.84$ & $1.37$ & $25.4$\\
$0.9$ & $19.89$ & $19.58$ & $1.6$ & $1.40$ & $1.35$ & $3.8$ & $1.09$ & $1.04$ & $3.8$ & $1.61$ & $1.41$ & $12.5$\\

                    \hline
        \end{tabular}
        }
    \scriptsize
    \caption{\footnotesize{Comparing the first four true cumulants
    with their corresponding MC-estimated values (multiplied by $1000$)
obtained with $10^6$ simulations and $\Delta t=1/365$. The exact method
uses subdivisions in $L=10$ intervals.}}\label{tab:ou:cts:1:365}
\end{table}

\begin{table}[ht!]
    \centering\scriptsize
        \resizebox{\textwidth}{!}{
        \begin{tabular}{*{13}{|c|rrr|rrr|rrr|rrr}}
                    \hline
&       \multicolumn{3}{c|}{$c_{X, 1}(0,\Delta t)$} & \multicolumn{3}{c|}{$c_{X, 2}(0,\Delta t)$} & \multicolumn{3}{c|}{$c_{X, 3}(0,\Delta t)$} & \multicolumn{3}{c|}{$c_{X, 4}(0,\Delta t)$} \\
                        \hline
                                \multicolumn{13}{|c|}{Algorithm\myref{alg:ou}}\\
                    \hline
                    $\alpha$ & true & MC & err \% & true & MC & err \% & true & MC & err \% & true & MC & err \% \\
                    \hline
$0.1$ & $3.54$ & $3.54$ & $0.1$ & $1.64$ & $1.65$ & $-0.8$ & $1.68$ & $1.73$ & $-2.7$ & $2.75$ & $2.89$ & $-5.2$\\
$0.3$ & $4.60$ & $4.58$ & $0.4$ & $1.65$ & $1.62$ & $1.9$ & $1.52$ & $1.49$ & $2.0$ & $2.31$ & $2.36$ & $-2.1$\\
$0.5$ & $6.72$ & $6.72$ & $-0.1$ & $1.73$ & $1.74$ & $-1.0$ & $1.40$ & $1.43$ & $-2.0$ & $1.97$ & $1.98$ & $-0.3$\\
$0.7$ & $12.12$ & $12.15$ & $-0.2$ & $1.87$ & $1.88$ & $-0.4$ & $1.31$ & $1.31$ & $0.6$ & $1.70$ & $1.63$ & $4.0$\\
$0.9$ & $41.24$ & $41.22$ & $0.1$ & $2.12$ & $2.12$ & $0.2$ & $1.26$ & $1.25$ & $0.8$ & $1.49$ & $1.42$ & $4.6$\\
                        \hline
                    \multicolumn{13}{|c|}{$X_1$ only}\\
                    \hline
                    $\alpha$ & true & MC & err \ & true & MC & err \ & true & MC & err \ & true & MC & err \ \\
                    \hline
$0.1$ & $3.54$ & $2.37$ & $33.0$ & $1.64$ & $0.67$ & $58.9$ & $1.68$ & $0.41$ & $75.7$ & $2.75$ & $0.39$ & $85.8$\\
$0.3$ & $4.60$ & $3.35$ & $27.1$ & $1.65$ & $0.74$ & $55.6$ & $1.52$ & $0.39$ & $74.2$ & $2.31$ & $0.33$ & $85.6$\\
$0.5$ & $6.72$ & $5.34$ & $20.4$ & $1.73$ & $0.84$ & $51.4$ & $1.40$ & $0.40$ & $71.4$ & $1.97$ & $0.33$ & $83.3$\\
$0.7$ & $12.12$ & $10.57$ & $12.8$ & $1.87$ & $1.00$ & $46.8$ & $1.31$ & $0.40$ & $69.3$ & $1.70$ & $0.29$ & $83.2$\\
$0.9$ & $41.24$ & $39.37$ & $4.5$ & $2.12$ & $1.24$ & $41.6$ & $1.26$ & $0.44$ & $65.2$ & $1.49$ & $0.32$ & $78.3$\\
                    \hline
                        \multicolumn{13}{|c|}{Approximation 2 }\\
                    \hline
                    $\alpha$ & true & MC & err \ & true & MC & err \ & true & MC & err \ & true & MC & err \ \\
                    \hline
$0.1$ & $3.54$ & $2.29$ & $35.4$ & $1.64$ & $0.65$ & $60.4$ & $1.68$ & $0.39$ & $77.0$ & $2.75$ & $0.35$ & $87.3$\\
$0.3$ & $4.60$ & $2.95$ & $35.8$ & $1.65$ & $0.65$ & $60.8$ & $1.52$ & $0.34$ & $77.5$ & $2.31$ & $0.29$ & $87.7$\\
$0.5$ & $6.72$ & $4.34$ & $35.4$ & $1.73$ & $0.69$ & $60.2$ & $1.40$ & $0.33$ & $76.7$ & $1.97$ & $0.26$ & $86.9$\\
$0.7$ & $12.12$ & $7.82$ & $35.5$ & $1.87$ & $0.74$ & $60.3$ & $1.31$ & $0.31$ & $76.6$ & $1.70$ & $0.23$ & $86.5$\\
$0.9$ & $41.24$ & $26.60$ & $35.5$ & $2.12$ & $0.84$ & $60.5$ & $1.26$ & $0.29$ & $76.9$ & $1.49$ & $0.20$ & $86.8$\\
                    \hline
        \end{tabular}
        }
    \scriptsize
    \caption{\footnotesize{Comparing the first four true cumulants
    with their corresponding MC-estimated values (multiplied by $100$)
obtained with $10^6$ simulations and $\Delta t=30/365$. The exact method
uses subdivisions in $L=10$ intervals.}}\label{tab:ou:cts:30:365}
\end{table}

\section{Conclusions}\label{sec:conclusions}
In this paper we have studied the transition laws of the tempered
stable related OU processes with finite variation from the
standpoint of the \arem s of \sd\ distributions: in fact, the
transition law of any OU process essentially coincides with the
distribution of the \arem\ of its stationary \sd\ distribution. To
this purpose, we first derived the \Levy\ triplet of the \arem\ of a
general \sd\ law that is then instrumental to find the
representation of the transition law of tempered stable related OU
processes with finite variation. We thereafter focused our attention
on the CTS-OU and the OU-CTS processes: respectively those whose
stationary law is a CTS distribution, and those whose BDLP is a CTS
process. As already done in Zhang\mycite{ZZhang07}, Kawai and
Masuda\mycite{KawaiMasuda_1} and Qu et al.\mycite{QDZ20}, we have
shown that their transition law coincides with the distribution of
the sum of a CTS distributed \rv\, (with scaled parameters), of a
suitable compound Poisson \rv\ and of a degenerate term: we
accordingly also derived their path-generation algorithms.

As for the simulation of the skeleton of CTS-OU processes, our
proposed procedure amounts to an improvement with respect to the
existing solutions presented in Zhang and Zhang\mycite{ZZhang07},
Zhang\mycite{Zhang11}, Kawai and Masuda\mycite{KawaiMasuda_1}:
indeed it does not rely on additional acceptance rejection methods
other than that required to generate a CTS distributed \rv. On the
other hand, also the simulation procedure for a OU-CTS process is
based on an acceptance rejection approach more efficient than that
described in Qu et al.\mycite{QDZ20}, because here the number of
iterations before acceptance can be made arbitrarily close to $1$ no
matter how fine we choose the time grid of the skeleton.

Although we have considered in the present paper only the CTS
distributions restricted on the positive real axis, the results can
be easily extended to the bilateral case and the simulation of the
relative processes would be simply obtained by running twice the
proposed algorithms. A further object of our future inquiries will
be instead the possible extension to the $p\,$-TS related OU
processes combined with the application of the algorithms recently
proposed in Grabchak\mycite{Grabchak20b} to draw samples from
$p\,$-TS laws. We remark moreover that, due to the fact that the
laws of the \arem s are \id, our approach is also suited to build
and simulate new \Levy\ processes via the subordination of a Brownian
motion with the \Levy\ process generated by the \arem\ of a gamma
and IG law, respectively (as done for instance in Gardini
et al.\mycite{Gardini20b, Gardini20a}).

All these algorithms would finally be especially useful for a
simulation-based statistical inference, and for some financial
applications like as the derivative pricing and the value-risk
calculations. To this end, a possible future research line could be
the study of the time reversal simulations in the spirit of some
recent papers by Pellegrino and Sabino\mycite{PellegrinoSabino15}
and Sabino\mycite{Sabino20a} relatively to the time-changed OU
processes introduced in Li and Linetsky\mycite{LiLinetsky2014}.

        \bibliographystyle{plain}
        \bibliography{Cufaro_Sabino_CTSOUCTS}
\end{document}